\documentclass[]{interact}
\usepackage{comment}
\usepackage[numbers,sort&compress]{natbib}
\bibpunct[, ]{[}{]}{,}{n}{,}{,}% Citation support using natbib.sty

\theoremstyle{plain}%
\usepackage[table, svgnames, dvipsnames]{xcolor}
\usepackage{lineno,hyperref}
\modulolinenumbers[5]
\usepackage{natbib}
\usepackage{tabularx}
\usepackage[utf8]{inputenc}
\usepackage{amsthm}
\usepackage{amsmath}
\usepackage{amsfonts}
\usepackage{booktabs}
\newtheorem{theorem}{Theorem}
\newtheorem{corollary}{Corollary}
\usepackage{amssymb}
\usepackage{algorithm}
\usepackage{algcompatible}

\usepackage{graphicx}
\usepackage{xcolor}
\usepackage{xspace}
\usepackage{etoolbox}
\usepackage{appendix}
\usepackage[capitalize]{cleveref}

\AtBeginEnvironment{appendices}{\crefalias{section}{appendix}}

\hypersetup{
    colorlinks,%
    citecolor=green,%
    filecolor=black,%
    linkcolor=red,%
    urlcolor=blue
}

\bibliographystyle{plain}

%\address[nrel]{National Renewable Energy Laboratory}

\newcommand{\bmat}[1]{\begin{bmatrix}#1\end{bmatrix}}

\newcommand{\inv}{^{-1}}
\newcommand{\LDLT}{\hbox{LDL$^{\hbox{\scriptsize \raisebox{-1pt}{\!T}}}$}\xspace}
\newcommand{\nnz}{\textrm{nnz}}
\newcommand{\norm}[1]{\|#1\|}

\newcommand{\Hgam}{H_{\gamma}}
\newcommand{\Hkgam}{H_{K\gamma}}
\newcommand{\Hdel}{H_{\delta}}
\newcommand{\Htilde}{\widetilde H}
\newcommand{\Matlab}{{\sc Matlab}\xspace}
\newcommand{\nx}{{n_x}} 

\newcommand{\mc}{{m_c}}
\newcommand{\md}{{m_d}}
\newcommand{\rhat}{\skew3\hat r}
\newcommand{\rtilde}{\tilde{r}}

\newcommand{\xtilde}{\skew3\tilde x}

\newcommand{\Cpp}{C\raisebox{1.5pt}{\small $++$}\xspace}

\newcommand{\fac}{_\textrm{fac}}
\newcommand{\op}{_\textrm{op}}
\newcommand{\Width}{0.9}   % Size for Figs 3-8

\begin{document}
\title{A Hybrid Direct-Iterative Method for Solving KKT Linear Systems}

\author{
\name{Shaked Regev\textsuperscript{a}\thanks{CONTACT Shaked Regev Email: sregev@stanford.edu},
Nai-Yuan Chiang\textsuperscript{b},
Eric Darve\textsuperscript{a},
Cosmin G. Petra\textsuperscript{b}, \\
Michael A. Saunders\textsuperscript{a},
Kasia \'{S}wirydowicz\textsuperscript{c}, and
Slaven~Pele\v{s}\textsuperscript{c}
}
\affil{\textsuperscript{a}Stanford University, 450 Serra Mall, Stanford,  California, USA\\
\textsuperscript{b}Lawrence Livermore National Laboratory, 7000 East Ave, Livermore, California, USA\\
\textsuperscript{c}Pacific Northwest National Laboratory, 902 Battelle Blvd, Richland, Washington, USA}
}

\maketitle
\begin{abstract}
%We propose a method to solve sparse indefinite linear systems that arise at each iteration of an interior method for solving constrained optimization problems.
We propose a solution strategy for linear systems arising in interior method optimization, which is suitable for implementation on hardware accelerators such as graphical processing units (GPUs). The current gold standard for solving these systems is the \LDLT factorization. However, \LDLT requires pivoting during factorization, which substantially increases communication cost and degrades performance on GPUs. Our novel approach solves a large indefinite system by solving multiple smaller positive definite systems, using an iterative solve for the Schur complement and an inner direct solve (via Cholesky factorization) within each iteration. Cholesky is stable without pivoting, thereby reducing communication and allowing reuse of the symbolic factorization. We demonstrate the practicality of our approach and show that on large systems it can efficiently utilize GPUs and outperform \LDLT factorization of the full system.
\end{abstract}

\begin{keywords}
   Optimization; interior methods; KKT systems; sparse matrix factorization; GPU
\end{keywords}

\section{Introduction}
Interior methods for nonlinear optimization~\cite{wachter2006implementation,Byrd2006,Vanderbei1999,GONDZIO1995} are essential in many areas of science and engineering. They are commonly used in model predictive control with applications to robotics \cite{sleiman2019contact}, autonomous cars \cite{wang2020non}, aerial vehicles \cite{jerez2017forces}, combustion engines \cite{keller2020model}, and heating, ventilation and air-conditioning systems \cite{ma2014stochastic}, to name a few. Interior methods are also used in public health policy strategy development \cite{acemoglu2020optimal,silva2013optimal},  data fitting in physics \cite{reinert2018semilocal}, genome science \cite{andronescu2010computational,varoquaux2014statistical}, and many other areas. 

Most of the computational cost within interior methods is in solving linear systems of Karush-Kuhn-Tucker (KKT) type~\cite{NocW2006}. The linear systems are sparse, symmetric indefinite, and usually 
%very
ill-conditioned and difficult to solve. Furthermore, implementations of interior methods for nonlinear optimization,  such as the filter-line-search approach in Ipopt~\cite{wachter2006implementation} and HiOp~\cite{hiopuserguide}, typically expect the linear solver to provide the matrix inertia (number of positive, negative and zero eigenvalues)
to determine if the system should be regularized. (Otherwise, interior methods perform curvature tests to ensure descent in a certain merit function~\cite{Chiang2016}.) Relatively few linear solvers are equipped to solve KKT systems, and even fewer to run those computations on hardware accelerators such as graphic processing units (GPUs)~\cite{swirydowicz2020linear}. 

At the time of writing, six out of the ten most powerful computers in the world have more than 90\% of their compute power in hardware accelerators \cite{top500}. Hardware accelerator technologies are becoming ubiquitous in off-the-shelf products, as well. In order to take advantage of these emerging technologies, it is necessary to develop fine-grain parallelization techniques tailored for high-throughput devices such as GPUs.

For such sparse problems, pivoting becomes extremely expensive, as data management takes a large fraction of the total time compared to computation~\cite{LDLpivot}. Unfortunately, \LDLT factorization is unstable without pivoting. This is why \LDLT approaches, typically used by interior methods for nonlinear problems on CPU-based platforms \cite{tasseff2019}, have not performed as well as on hardware accelerators \cite{swirydowicz2020linear}. Some iterative methods such as MINRES~\cite{MINRES} for general symmetric matrices can make efficient (albeit memory bandwidth limited) use of GPUs because they only require matrix-vector multiplications at each iteration, which can be highly optimized, but they have limited efficiency when the number of iterations becomes large. Another approach for better-conditioned KKT systems is using a modified version of the preconditioned conjugate gradient (\textbf{PCG}) with implicit-factorization preconditioning~\cite{Dollar2006}. In our case, the ill-conditioned nature of our linear problems means that iterative methods alone are not practical~\cite{MINRES,Pyzara2011}.

We propose a hybrid direct-iterative method for solving KKT systems that is suitable for execution on hardware accelerators. The method only requires direct solves using a Cholesky factorization, as opposed to \LDLT, which means it avoids pivoting. We provide preliminary test results that show the practicality of our approach. Our test cases are generated by optimal power flow analysis \cite{chakrabarti2014security, kourounis2018toward, molzahn2017survey} applied to realistic power grid models that resemble actual grids, but do not contain any proprietary data~\cite{ACOPF}. These systems are extracted from optimal power flow analysis using Ipopt~\cite{wachter2006implementation} with MA57 as its linear solver. Solving such sequences of linear problems gives us an insight in how our linear solver behaves within an interior method. Using these test cases allowed us to assess the practicality of our hybrid approach without interfacing the linear solver with an optimization solver. 
Power grids are representative of very sparse and irregular systems commonly found in engineering disciplines.

The paper is organized as follows. \Cref{tab:notation} defines our notations. \cref{sec:nonlinear} describes the optimization problem being solved. \cref{sec:KKT} defines the linear systems that arise when an interior method is applied to the optimization problem. In \cref{sec:block2x2}, we derive a novel hybrid direct-iterative algorithm to utilize the block structure of the linear systems, and prove convergence properties for the algorithm. Numerical tests in \cref{sec:testing} show the accuracy of our algorithm on realistic systems, using a range of algorithmic parameter values.  %\Cref{sec:reorder} compares several reordering methods for the problem. 
\Cref{sec:compare} compares our \Cpp and CUDA implementation to MA57~\cite{Duff2004}. \Cref{sec:itdirect} explains our 
decision to use a direct solver in the inner loop of our algorithm. In \cref{sec:summary}, we summarize our main contributions and results. \Cref{appA} provides supplemental figures for \cref{sec:testing}.

%\subsection{Notation}
\label{sec:notation}
\begin{table}[t] %[htb]  % Table 1
\caption{\label{tab:notation} Notation. SP(S)D stands for symmetric positive (semi)definite}

\centering
\smallskip
%\small
\footnotesize

\begin{tabular}{lrrr} \toprule
   Variable  & Properties      & Functions & Meaning
\\ \midrule
   $M$ & Symmetric matrix & $\lambda_{\max}(M)$, $\lambda_{\min}(M)$ & largest, smallest (most negative) eigenvalues 
\\$M$& SPSD matrix & $\lambda_{\min *}(M)$& the smallest nonzero eigenvalue
\\$M$& SPD matrix & $\kappa(M)=\lambda_{\max}(M)/\lambda_{\min}(M)$& condition number 
\\ $J$& rectangular matrix & null($J$)& nullspace 
\\ $x$ & vector, $x>0$ & $X\equiv \text{diag}(x)$ & A diagonal matrix $X$, $X_{ii}=x_i$
\\ $e_{p}$ & vector &  &a $p$-vector of $1$s 
\\ \bottomrule
\end{tabular}
\end{table}

%Let $M$ be a symmetric matrix. We denote the largest and smallest (most negative) eigenvalues by $\lambda_{\max}(M)$ and $\lambda_{\min}(M)$. If $M$ is positive semidefinite, $\lambda_{\min *}(M)$ denotes the smallest nonzero eigenvalue. The condition number of a symmetric positive definite (\textbf{SPD}) matrix $M$ is $\kappa(M)=\lambda_{\max}(M)/\lambda_{\min}(M)$.
%The nullspace of a matrix $J$ is denoted by null($J$). For any vector $v$, we define a diagonal matrix $V$ such that $V_{ii}=v_i$. Let $e_{dim}$ denote a vector of all $1$s of appropriate dimensions.
%For any vector $v$, division such as $1/v$ is taken element-wise.

\section{Nonlinear optimization problem}
\label{sec:nonlinear}

We consider constrained nonlinear optimization problems of the form
\begin{subequations}\label{problemstatement}
\begin{align}
   && \min_{x\in\mathbb{R}^\nx}\ \ & f(x) \label{problemstatement_a}
\\ && \text{s.t.}        \ \ & c(x) = 0,    && \label{problemstatement_b}
\\ &&                        & d(x) \ge 0,  && \label{problemstatement_c} 
\\ &&                        &    x \ge 0,  && \label{problemstatement_d}  
\end{align}
\end{subequations}
where $x$ is an $\nx$-vector of optimization parameters, $f:~\mathbb{R}^\nx\rightarrow \mathbb{R}$ is a possibly nonconvex objective function, $c:~\mathbb{R}^\nx\rightarrow \mathbb{R}^\mc$ defines $m_c$ equality constraints, and $d:~\mathbb{R}^\nx\rightarrow \mathbb{R}^\md$ defines $m_d$ inequality constraints.
(Problems with more general inequalities can be treated in the same way.)
Functions $f(x)$, $c(x)$ and $d(x)$ are assumed to be twice continuous differentiable.
Interior methods enforce bound constraints \eqref{problemstatement_d} by adding barrier functions to the objective \eqref{problemstatement_a}:
$$ 
\min_{x\in\mathbb{R}^\nx,\,s\in\mathbb{R}^\md}
\ f(x) - \mu\sum_{j=1}^\nx \ln{x_j} - \mu\sum_{i=1}^\md \ln{s_i},
$$
where the inequality constraints \eqref{problemstatement_c}
are treated as equality constraints $d(x)-s=0$ with slack variables $s\ge 0$. The barrier parameter $\mu>0$ is  reduced toward zero using a continuation method to obtain solution that is close to the solution of \eqref{problemstatement} to within a solver tolerance.

%$f(x)$ is a scalar objective function $\mathbb{R}^\nx\rightarrow \mathbb{R}$, and smooth vector functions $c(x)$ and $d(x)$ are functions $\mathbb{R}^\nx\rightarrow \mathbb{R}^\mc$ and $\mathbb{R}^\nx\rightarrow \mathbb{R}^\md$  defining equality and inequality constraints. 
%The slack variable $s$ transforms the inequality constraint $d(x)\ge 0$ to an equality constraint. 
%Quantities inside brackets are Lagrange multipliers (dual variables) for the constraints. 
%For infinite bounds, the multiplier is $0$.
%Interior methods define a sequence of ``barrier subproblems'' that are the same as \cref{problemstatement} except with the objective function 
% $$ 
% \min_{x\in\mathbb{R}^\nx,\,s\in\mathbb{R}^\md}
% \ f(x) - \mu\sum_{j=1}^\nx \ln{x_j} - \mu\sum_{i=1}^\md \ln{s_i}
% $$
% instead of \eqref{problemstatement_a}, 
% where \cref{problemstatement_c} is treated as $d(x)-s=0$ with slack variables $s\ge 0$.
% The barrier parameter $\mu$ is positive and reduced in stages toward zero.

Interior methods are most effective when exact first and second derivatives are available, as we assume for $f(x)$, $c(x)$, and $d(x)$. We define
$J(x) = \nabla c(x)$ and $J_d(x)=\nabla d(x)$ as the sparse Jacobians for the constraints. 
%$D_x$ and $D_s$ are nonnegative diagonal matrices arising from the derivatives of the barrier function with respect to $x$ and $s$.
The solution of a barrier subproblem satisfies the nonlinear equations
\begin{subequations}\label{nonlinearequations}
\begin{align}
   \nabla f(x) + J^T y + J_d^T y_d -  z_x &= 0 
   \label{nonlinearequations_a}
\\ y_d  - z_s &=  0 
 \label{nonlinearequations_b}
\\   c(x)&=0
 \label{nonlinearequations_c}
\\    d(x)-s&=0 
\label{nonlinearequations_d}
\\ Xz_x- \mu e_\nx&=0 
\label{nonlinearequations_e}
\\ Sz_s - \mu e_\md&= 0 \label{nonlinearequations_f}, 
\end{align}
\end{subequations}
where $x$ and $s$ are primal variables, $y$ and $y_d$ are Lagrange multipliers (dual variables) for constraints \eqref{nonlinearequations_c}--\eqref{nonlinearequations_d}, and  $z_x$ and $z_s$ are Lagrange multipliers for the bounds $x\ge 0$ and $s \ge 0$.  The conditions $x>0$, $s>0$, $z_x>0$, and $z_s>0$ are maintained throughout, and the matrices $X\equiv \text{diag}(x)$ and $S\equiv \text{diag}(s)$ are SPD.

\begin{comment}

\begin{align} \label{linsys6by6}
\begin{bmatrix}
H     & 0         & J^T     & J_d^T & -I & 0
     \\ 0         & 0    & 0           & -I   & 0 & -I 
     \\ J     & 0         & 0           & 0 & 0 & 0
     \\ J_d     & -I        & 0           & 0 & 0 &0
     \\Z_x & 0 & 0 &0 &X & 0
     \\0 & Z_s & 0 &0 & 0 & S
  \end{bmatrix}
  \begin{bmatrix}
\Delta x \\ \Delta s \\ \Delta y \\ \Delta y_d \\ \Delta z_x \\ \Delta z_s
  \end{bmatrix}=
    \begin{bmatrix}
\rtilde_{x} \\ r_s \\ r_{y} \\ r_{yd} \\ r_{z_x} \\ r_{z_c}
  \end{bmatrix},
\end{align}
where the $r$s correspond respectively to $-$ the left hand sides (\textbf{LHS}) of \cref{nonlinearequations}.

\end{comment}

%The full details of obtaining a linear system from~\cref{nonlinearequations} are available in~\cite{wachter2006implementation}. 
%For our purposes, it suffices to say that interior methods apply Newton's method for nonlinear equations to \cref{nonlinearequations} to generate a series of linear systems
Analogously to \cite{wachter2006implementation}, at each continuation step in $\mu$ we solve nonlinear equations \eqref{nonlinearequations} using a variant of Newton's method. Typically $z_x$ and $z_s$ are eliminated from the linearized version of \eqref{nonlinearequations} by substituting the linearized versions of \eqref{nonlinearequations_e} and \eqref{nonlinearequations_f} into the linearized versions of \eqref{nonlinearequations_a} and \eqref{nonlinearequations_b}, respectively, to obtain a smaller \textit{symmetric} problem.  Newton's method then calls the linear solver to solve a series of linearized systems 
$K_k \Delta x_k = r_k$, $k=1,2,\dots$, of block $4 \times 4$ form %(with the subscript $k$ understood implicitly): 
\begin{align} \label{linsys4x4}
\overbrace{\begin{bmatrix}
      H + D_x     & 0         & J^T     & J_d^T
     \\ 0         & D_s       & 0           & -I    
     \\ J     & 0         & 0           & 0
     \\ J_d     & -I        & 0           & 0 
  \end{bmatrix}}^{K_k}
  \overbrace{\begin{bmatrix}
\Delta x \\ \Delta s \\ \Delta y \\ \Delta y_d
  \end{bmatrix}}^{\Delta x_k}=
    \overbrace{\begin{bmatrix}
\rtilde_{x} \\ r_s \\ r_{y} \\ r_{yd}
  \end{bmatrix}}^{r_k},
\end{align}
where index $k$ denotes optimization solver iteration (including continuation step in $\mu$ and Newton iterations), each $K_k$ is a KKT-type matrix (with saddle-point structure), vector $\Delta x_k$ is a search direction\footnote{Search directions are defined such that $x_{k+1}=x_{k}+\alpha \Delta x$ for some linesearch steplength $\alpha>0$.} for the primal and dual variables, and 
 $r_k$ is derived from the residual vector for \eqref{nonlinearequations}
 evaluated at the current value of the primal and dual variables (with $\norm{r_k} \rightarrow 0$ as the method converges):
\begin{align*}
   \rtilde_{x} &= -(\nabla f(x) + J^T y + J_d^T y_d + \mu X^{-1}e_\nx), 
\\ r_s &= - (y_d +\mu S^{-1}e_\md),
   \quad r_{y}= - c(x),
   \quad r_{yd}=s-d(x).
\end{align*}
With $Z_x\equiv  \text{diag}(z_x)$ and $Z_s\equiv \text{diag}(z_s)$, the sparse Hessian
\begin{equation*}
H \equiv \nabla^2 f(x) + \sum_{i=1}^{\mc} y_{c,i} \nabla^2 c_i(x) + \sum_{i=1}^{\md} y_{d,i} \nabla^2 d_i(x)
\end{equation*}
and diagonal $D_x \equiv X^{-1} Z_x$ are $\nx \times \nx$ matrices, $D_s \equiv S^{-1} Z_s$ is a diagonal $\md\times \md$ matrix, $J$ is a sparse $\mc\times\nx$ matrix, and $J_d$ is a sparse $\md \times \nx$ matrix.  We define $m\equiv \mc+\md$, $n\equiv \nx+\md$, and $N\equiv m+n$. 

Interior methods may take hundreds of iterations  (but typically not thousands) before they converge to a solution. All $K_k$ matrices have the same sparsity pattern, and their nonzero entries change slowly with~$k$. An interior method can exploit this
%to reduce computational cost
by reusing output from linear solver functions across multiple iterations $k$:
\begin{itemize}
    \item Ordering and symbolic factorization are needed only once because the sparsity pattern is the same for all $K_k$.
    \item Numerical factorizations can be reused over several adjacent Newton's iterations, e.g., when an inexact Newton solver is used within the optimization algorithm.
\end{itemize}
Operations such as triangular solves have to be executed at each iteration $k$.

The workflow of the optimization solver with calls to different linear solver functions is shown in \cref{fig:workflow}
(where $K_k \Delta x_k = r_k$ denotes the linear system to be solved at each iteration).
The main \emph{optimization} solver loop is the top feedback loop in \cref{fig:workflow}. It is repeated until the solution is optimal or a limit on optimization iterations is reached. At each iteration, the residual vector $r_k$ is updated. Advanced implementations
% This is said in the above bullet:
%might reuse the KKT matrix $K_k$ (and its factorization) over a few iterations before updating it.
%They also 
have control features to ensure stability and convergence of the optimization solver. The lower feedback loop in \cref{fig:workflow} shows linear system regularization by adding a diagonal perturbation to the KKT matrix. One such perturbation removes singularity  \cite[Sec.~3.1]{wachter2006implementation}, which happens if there are redundant constraints. The linear solver could take advantage of algorithm control like this and request matrix perturbation when beneficial.

\begin{figure*}[t]   % Figure 1
\centering
  \includegraphics[width=0.9\textwidth]{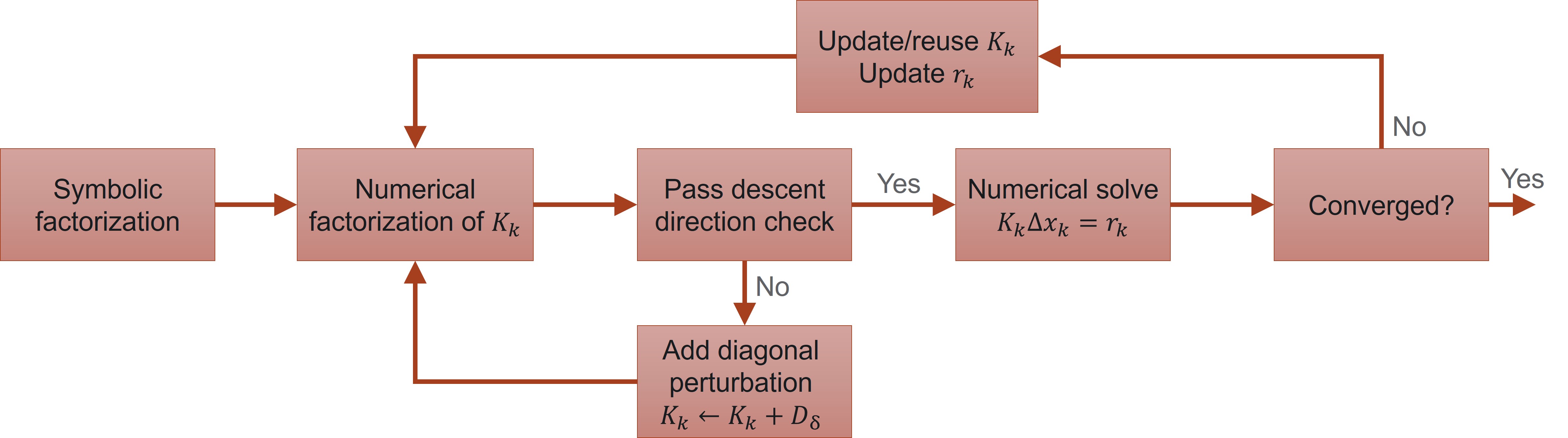}
  \caption{Optimization solver workflow showing invocation of key linear solver functions. The top feedback loop represents the main optimization solver iteration loop. The bottom feedback loop is the optimization solver control mechanism to regularize the underlying problem when necessary.}
  \label{fig:workflow}
\end{figure*}

\section{Solving KKT linear systems}
\label{sec:KKT}
 $ $\LDLT factorization via MA57 \cite{Duff2004} has been used effectively for extremely sparse  problems on traditional CPU-based platforms, but is not suitable for fine grain parallelization required for GPU acceleration. 
 Parallel and GPU accelerated direct solve implementations such as SuperLU~\cite{Li2005,SuperLUweb}, STRUMPACK~\cite{Rouet2016, SSTRUMPACKweb}, and PaStiX~\cite{PaStiX,pastixweb} exist for general symmetric indefinite systems (although the first two are designed for general systems), but these software packages are designed to take advantage of dense blocks of the matrices in denser problems and do not perform well on our systems of interest, which do not yield these dense blocks~\cite{swirydowicz2020linear,he2015gpu}.

The fundamental issue with using GPUs for $\LDLT$ is that this factorization is not stable without pivoting~\cite{GV4}. Pivoting requires considerable data movement and, as a result, a substantial part of the run time is devoted to memory access and communication. Any gains from the hardware acceleration of floating point operations are usually outweighed by the overhead associated with permuting the system matrix during the pivoting.  This is especially burdensome because both rows and columns need to be permuted in order to preserve symmetry~\cite{LDLpivot}. While any of the two permutations can be performed efficiently on its own with an appropriate data structure for the system's sparse matrix (\textit{i.e.}, row compressed sparse storage for row permutations and, analogously, column compressed sparse storage for column permutations) swapping both rows and columns  simultaneously  is necessarily costly.

Here we propose a method that  uses sparse Cholesky factorization (in particular, a GPU implementation). Cholesky factorization is advantageous for a GPU implementation because it is stable without pivoting and can use GPUs efficiently compared to \LDLT~\cite{RENNICH2016140}. Furthermore, the ordering of the unknowns (also known as the symbolic factorization) for the purpose of reducing fill-in in the factors can be established without considering the numerical values and only once at the beginning of the optimization process, and hence, its considerable computational cost is amortized over the optimization iterations.

\begin{comment}
Each system $K_k \Delta x_k = r_k$ has the block $4 \times 4$ form \cite[eq.~(11)]{wachter2006implementation}: 
\begin{align} \label{linsys4x4}
\bmat{H + D_x     & 0         & J^T     & J_d^T
     \\ 0         & D_s     & 0           & -I    
     \\ J     & 0         & 0           & 0
     \\ J_d     & -I        & 0           & 0 }
  \bmat{\Delta x \\ \Delta s \\ \Delta y \\ \Delta y_d}
  =
  \bmat{\rtilde_{x} \\ r_s \\ r_{y} \\ r_{yd}},
\end{align}
where $J(x) = \nabla c(x)$ and $J_d(x)=\nabla d(x)$ are sparse Jacobians for the constraints, and sparse matrix
\begin{align*}
H& = \nabla^2 f(x) + \sum_{i} y_{c,i} \nabla^2 c_i(x) + \sum_{i} y_{d,i} \nabla^2 d_i(x)%\label{spHess}
\end{align*}
is defined from the problem Hessians. $D_x$ and $D_s$ are nonnegative diagonal matrices arising from derivatives of barrier functions for inequality constraints. We assume $J$ has {\em full row rank} %and $H+D_x$ is {\em nonsingular} 
as required by the optimization solver.

% Each KKT system $K_k \Delta x_k = r_k$ 
% 21
% assembled by Ipopt \cite[eq.~(11)]{wachter2006implementation} has the block $4 \times 4$ form (\ref{linsys4x4}) 
% \begin{align} \label{linsys4x4}
% \bmat{H + D_x   & 0         & J^T     & J_d^T
%      \\ 0         & D_s     & 0           & -I    
%      \\ J     & 0         & 0           & 0
%      \\ J_d     & -I        & 0           & 0 }
%   \bmat{\Delta x \\ \Delta d \\ \Delta y \\ \Delta y_d}
%   =
%   \bmat{\rtilde_{x} \\ r_s \\ r_{y} \\ r_{yd}},
% \end{align}
% where we have separated equality and inequality constraints,
% and we assume $J$ has full row rank.
%Here:
% \begin{itemize}
% \item $J$ and $J_d$ are sparse Jacobians for equality and inequality constraints.
% \item $H$ is a sparse Hessian matrix.
% \item Diagonal $D_x$ arises from primal variables $x$ in the log-barrier function, meaning the objective function is augmented by a $-\Sigma_i \log(x_i)$ term to drive the $x_i$s away from $0$.
% \item Diagonal $D_s$ arises from slack variables $d$ in the log-barrier function.
% \end{itemize}
\end{comment}

To make the problem smaller, we can eliminate $\Delta s = J_d \Delta x - r_{yd}$
and $\Delta y_d = D_s \Delta s - r_s $ from \eqref{linsys4x4} to obtain the $2 \times 2$ system~\cite[Sec.~3.1]{petra2009computational} 
\begin{align} \label{linsys2x2}
  \bmat{\Htilde & J^T
    \\  J     & 0}
  \bmat{\Delta x \\ \Delta y}
  = \bmat{r_x \\ r_{y}},
  \quad
  \Htilde \equiv H + D_x + J_d^T D_s J_d, 
\end{align}
where $r_x = \rtilde_x + J_d^T (D_s r_{yd} + r_s )$. 
%Our definitions in \eqref{linsys4x4} mean $J$ is $\mc \times \nx$ ($\mc<\nx$). With $\Nr \equiv \mc+\nx$, the system is $\Nr \times \Nr$.
This reduction requires block-wise Gaussian elimination with
block pivot {\footnotesize$\bmat{D_s & -I \\ -I}$},
which is ill-conditioned when $D_s$ has large elements,
as it ultimately does. Thus, system \eqref{linsys2x2} is smaller but more ill-conditioned.
After solving \eqref{linsys2x2}, we compute $\Delta s$ and $\Delta y_d$ in turn to obtain the solution of \eqref{linsys4x4}. 
%Our method assumes $J$ has {\em full row rank} %and $H+D_x$ is {\em nonsingular} 
%as required by the optimization solver.
%Our numerical experiments examine how accurately \eqref{linsys2x2} must be solved to ensure adequate accuracy in the solution of \eqref{linsys4x4}.

\section{A block $2\times 2$ system solution method}
\label{sec:block2x2}
 Let $Q$ be any SPD matrix. Multiplying the second row of \eqref{linsys2x2} by $J^TQ$ and adding it to the first row gives a system of the form
\begin{align} \label{linsys2x2mod}
  \bmat{\Hgam & J^T \\ J & 0}
  \bmat{\Delta x \\ \Delta y}
 =
  \bmat{\rhat_x \\ r_{y}},
  \qquad \Hgam= \Htilde + J^TQJ,
\end{align}
where $\rhat_{x}=r_x+J^TQr_{y}$. The simplest choice is $Q=\gamma I$ with $\gamma>0$:
\begin{equation}\label{eq:gamma}
    \Hgam = \Htilde + \gamma \, J^T J.
\end{equation}
When $\Hgam$ is SPD, its Schur complement $S\equiv J\Hgam\inv J^T$ is well defined, and \eqref{linsys2x2mod} is equivalent to
\begin{align}
   S\Delta y   &= J\Hgam\inv\rhat_x-r_{y}, \label{Schur1} \\ 
   \Hgam\Delta x &= \rhat_x-J^T\Delta y.    \label{Schur2}
\end{align}
This is the approach of Golub and Greif \cite{GG03} for saddle-point systems (which have the structure of the $2\times 2$ system in \eqref{linsys2x2}). Golub and Greif found experimentally that $\gamma=\norm{\Htilde}/\norm{J}^2$ made $\Hgam$ SPD and better conditioned than smaller or larger values.
% and that CG convergence was very quick for this $\gamma$. 
%Michael maybe this was a longer explanation than what you were looking for, but the point is that it's addressed later in the paper. I couldn't find a way to write this more concisely, but we could delete it because we deal with it later in detail
%
We show in Theorems \ref{thm:2} and \ref{thm:4}
that for large $\gamma$, the condition number $\kappa(\Hgam)$ increases as $\gamma\rightarrow\infty$, but $\kappa(S)$ converges to $1$ as $\gamma\rightarrow\infty$. \Cref{cor:1} shows there is a finite value of $\gamma$ that minimizes $\kappa(\Hgam)$.  This value is probably close to $\gamma=\norm{\Htilde}/\norm{J}^2$.

Our contribution is to combine the system reduction in~\cite{petra2009computational} (from \eqref{linsys4x4} to \eqref{linsys2x2}) with the method of~\cite{GG03} for changing \eqref{linsys2x2} to \eqref{linsys2x2mod} to solve an optimization problem consisting of a series of block $4\times 4$ systems using a GPU implementation of sparse Cholesky factorization applied to $\Hgam$. A new method for regularizing is added, and practical choices for parameters are given based on scaled systems. Also, important convergence properties of the method are proven in Theorems \ref{thm:1}--\ref{thm:4}.

If \eqref{linsys2x2mod} or $\Hgam$ are poorly conditioned, the only viable option may be to ignore the block structure in \eqref{linsys2x2mod} and solve \eqref{linsys4x4} with an \LDLT factorization such as MA57 (likely without help from GPUs). This is the fail-safe option. Otherwise, we require $\Hgam$ to be SPD (for large enough $\gamma$) and use its Cholesky factorization to apply the conjugate gradient method (CG) \cite{CG} or MINRES \cite{MINRES} to \eqref{Schur1} with or without a preconditioner.%, where $\Hgam\inv$ is applied by utilizing its complete factorization and solving triangular systems, and $S$ is applied by multiplying by $J^T$, applying $\Hgam\inv$, and multiplying by $J$.
%If $J J^T$ is not too dense,
If the $R$ part of QR factors of $J^T$ is not too dense,
we could use
$$
M \equiv (JJ^T)\inv J\Hgam J^T(JJ^T)\inv
$$
as a multiplicative preconditioner. This gives the exact solution if the RHS is orthogonal to null($J$) or $J$ is square, which is not possible in our case. In our experiments, solutions with the preconditioner $M$ lagged slightly behind those without it, but both take $O(1)$ iterations. We proceed without the use of preconditioner.

\subsection{A hybrid solver with minimal regularization}
\label{sec:algorithm}
Typically, \eqref{linsys2x2} starts with an SPD $\Htilde$ and full-rank $J$. As the optimization solver iterations progress, $\Htilde$ may become indefinite and $J$'s rank may shrink (at least numerically).  This means the system becomes singular and must be regularized. We seek a small regularization to avoid changing too much the solution of the equivalent system \eqref{linsys2x2mod}.

An SPD $\Hgam$ guarantees that $\Htilde$ is SPD on null($J$), a requirement at the solution of the optimization problem.

\begin{theorem} % Theorem 1
\label{thm:1}
 For large enough $\gamma$ ($\gamma>\gamma_{\min}$) and full row rank $J$, $\Hgam=\Htilde + \gamma J^TJ$ is SPD iff $\Htilde$ is positive definite on null($J$).
\end{theorem}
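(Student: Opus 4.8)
The plan is to prove the equivalence by establishing the quadratic form $v^T \Hgam v > 0$ for all nonzero $v$ under the hypothesis, and conversely. Split any $v \in \mathbb{R}^n$ orthogonally with respect to the row space of $J$: write $v = v_0 + v_1$ where $v_0 \in \text{null}(J)$ and $v_1 \in \text{null}(J)^\perp = \text{range}(J^T)$. Then $Jv = Jv_1$ and
\[
v^T \Hgam v = v^T \Htilde v + \gamma \, \norm{Jv_1}^2 .
\]
The second term vanishes exactly when $v_1 = 0$ (using full row rank of $J$, which makes $J$ injective on $\text{range}(J^T)$), i.e.\ when $v \in \text{null}(J)$. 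So on $\text{null}(J)$ the form reduces to $v_0^T \Htilde v_0$, which is why positive definiteness of $\Htilde$ on $\text{null}(J)$ is clearly \emph{necessary}: if it fails, pick the offending $v_0$ and $\Hgam$ is not SPD for any $\gamma$.

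For sufficiency, assume $\Htilde$ is positive definite on $\text{null}(J)$. I would argue by contradiction / compactness: suppose for every $\gamma$ there is a unit vector $v^{(\gamma)}$ with $(v^{(\gamma)})^T \Hgam v^{(\gamma)} \le 0$. Decompose $v^{(\gamma)} = v_0^{(\gamma)} + v_1^{(\gamma)}$ as above. Then
\[
0 \ge (v^{(\gamma)})^T \Htilde v^{(\gamma)} + \gamma \norm{J v_1^{(\gamma)}}^2 \ge -\norm{\Htilde} + \gamma \sigma_{\min}(J)^2 \norm{v_1^{(\gamma)}}^2 ,
\]
where $\sigma_{\min}(J)>0$ is the smallest singular value of the full-rank $J$. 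Hence $\norm{v_1^{(\gamma)}}^2 \le \norm{\Htilde} / (\gamma \sigma_{\min}(J)^2) \to 0$ as $\gamma \to \infty$. Extract a convergent subsequence of the unit vectors $v^{(\gamma)}$; the limit $\bar v$ is a unit vector with $\bar v_1 = 0$, so $\bar v \in \text{null}(J)$, and by continuity $\bar v^T \Htilde \bar v \le 0$, contradicting positive definiteness of $\Htilde$ on $\text{null}(J)$. This yields a finite threshold $\gamma_{\min}$ beyond which $\Hgam$ is SPD.

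Alternatively — and probably cleaner for pinning down $\gamma_{\min}$ explicitly — I would get a quantitative bound directly: for $v=v_0+v_1$ a unit vector, expand $v^T\Htilde v = v_0^T\Htilde v_0 + 2 v_0^T \Htilde v_1 + v_1^T\Htilde v_1 \ge \mu_0 \norm{v_0}^2 - 2\norm{\Htilde}\,\norm{v_0}\norm{v_1} - \norm{\Htilde}\,\norm{v_1}^2$, where $\mu_0>0$ is the minimum of $w^T\Htilde w$ over unit $w\in\text{null}(J)$; combine with $\gamma\norm{Jv_1}^2 \ge \gamma\sigma_{\min}(J)^2\norm{v_1}^2$ and use $\norm{v_0}^2+\norm{v_1}^2=1$ to show the total is positive once $\gamma$ exceeds an explicit function of $\mu_0$, $\norm{\Htilde}$, and $\sigma_{\min}(J)$. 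The main obstacle is handling the cross term $v_0^T\Htilde v_1$: the quadratic form does not split cleanly along the orthogonal decomposition of the domain (it would if $v_0$ and $v_1$ were $\Htilde$-orthogonal, but they need not be), so one must absorb this indefinite cross term into the two positive contributions via a Young/AM–GM inequality, accepting a somewhat loose value of $\gamma_{\min}$ — which matches the paper's deliberately non-explicit "$\gamma > \gamma_{\min}$."
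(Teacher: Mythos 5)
Your proof is correct, and it takes a genuinely different route from the paper's. The paper proves sufficiency by expanding $v^T\Hgam v = v_0^T\Htilde v_0 + v_1^T(\Htilde+\gamma J^TJ)v_1$ and bounding the second term below by $\bigl(\lambda_{\min}(\Htilde)+\gamma\lambda_{\min*}(J^TJ)\bigr)\norm{v_1}^2$, which yields the explicit threshold $\gamma_{\min}=-\lambda_{\min}(\Htilde)/\lambda_{\min*}(J^TJ)$; you instead argue by contradiction and compactness, extracting a limiting unit vector in null($J$) on which $\Htilde$ would fail to be positive definite. What your approach loses is the explicit formula for $\gamma_{\min}$; what it buys is correctness of the sufficiency argument, because the cross term you flag as the ``main obstacle'' is precisely what the paper's expansion silently drops: $v^T\Htilde v$ contains $2v_0^T\Htilde v_1$, which need not vanish since Euclidean orthogonality of $v_0$ and $v_1$ is not $\Htilde$-orthogonality. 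Indeed the paper's explicit threshold is not sufficient in general: with $J=\bmat{0&1}$ and $\Htilde=\bmat{1&10\\10&0}$, $\Htilde$ is positive definite on null($J$) and $-\lambda_{\min}(\Htilde)/\lambda_{\min*}(J^TJ)=(\sqrt{401}-1)/2\approx 9.5$, yet $\Hgam=\bmat{1&10\\10&\gamma}$ is SPD only for $\gamma>100$. So your instinct to absorb the cross term (either by the compactness argument or by the Young/AM--GM variant you sketch) is not pedantry; it repairs a genuine gap, at the cost of a non-explicit $\gamma_{\min}$, which is all the theorem statement actually claims. Two small points to tidy up: the contradiction argument as written only produces a single $\gamma_0$ at which $\Hgam$ is SPD, so add the one-line observation that $\Hgam=H_{\gamma_0}+(\gamma-\gamma_0)J^TJ$ is SPD for every $\gamma\ge\gamma_0$ by monotonicity in the Loewner order; and in the necessity direction, state explicitly that failure of positive definiteness on null($J$) means some nonzero $v_0\in$ null($J$) has $v_0^T\Htilde v_0\le 0$, whence $v_0^T\Hgam v_0=v_0^T\Htilde v_0\le 0$ for every $\gamma$ (this half, as the paper also notes, does not use full row rank of $J$).
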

\begin{proof}
Assume $\Hgam$ is SPD. For any nonzero vector $v_0$ in null($J$), we have
%\begin{align}
  $v_0^T\Htilde v_0 = v_0^T\Hgam v_0 > 0$ (this direction of the proof does not require $J$ to have full row rank).
%\end{align}

Conversely, assume $\Htilde$ is positive definite on null($J$) and $J$ has full row rank. For any nonzero vector $v=v_0+v_1$ with $v_0$ in null($J$) and $v_1$ orthogonal to null($J$),
\begin{align*}
  v^T(\Htilde + \gamma J^TJ)v=
  v_0^T\Htilde v_0 + v_1^T(\Htilde + \gamma J^TJ)v_1.
\end{align*}
We have $v_0^T\Htilde v_0\ge 0$ by assumption. Further,
\begin{align*}
  v_1^T(\Htilde + \gamma J^TJ)v_1 \ge \big(\lambda_{\min}(\Htilde)
 + \gamma \lambda_{\min*}(J^TJ) \big) v_1^Tv_1>0 
\end{align*}
if $\gamma \geq\gamma_{\min}\equiv -\lambda_{\min}(\Htilde)/\lambda_{\min*}(J^TJ)$.
\end{proof}

We work with $\gamma \geq 0$. We use $\Hgam$ SPD 
as a proxy for $\Htilde$ being SPSD on null($J$), keeping in mind that if it does not hold even for a very large $\gamma$ in \eqref{eq:gamma}, $\Hgam$ is singular and needs to be modified. However, $\gamma$ cannot be made arbitrarily large without increasing $\kappa(\Hgam)$ when $J$ is rectangular, as in our case. There must be an ideal intermediate value of $\gamma$.

\begin{theorem} % Theorem 2
\label{thm:2}
If $J$ has full row rank with more columns than rows, $\Htilde$ is symmetric and positive definite on null($J$), and $\Hgam=\Htilde+\gamma J^TJ$, there exists $\gamma_{\max}\geq \max(\gamma_{\min},0)$ such that for $\gamma \geq \gamma_{\max}$, $\kappa(\Hgam)$ increases linearly with $\gamma$.
%For $\Hgam=\Htilde+\gamma J^TJ$, if $\Htilde$ is positive definite on null($J$)\,$\ne \emptyset$, there exists $\gamma_{\max}\geq \max(\gamma_{\min},0)$ such that for $\gamma \geq \gamma_{\max}$, $\kappa(\Hgam)$ increases linearly with $\gamma$.
\end{theorem}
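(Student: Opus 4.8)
The plan is to analyze the eigenvalues of $\Hgam = \Htilde + \gamma J^TJ$ by splitting $\mathbb{R}^n$ into $\mathcal{N} \equiv \mathrm{null}(J)$ and its orthogonal complement $\mathcal{R} \equiv \mathrm{range}(J^T)$. Since $J$ has full row rank $m < n$, $\mathcal{N}$ has dimension $n-m \ge 1$ and $\mathcal{R}$ has dimension $m$. On $\mathcal{N}$ the operator $J^TJ$ vanishes, so $\Hgam$ acts there as (the compression of) $\Htilde$, independently of $\gamma$; by Theorem~\ref{thm:1} and the hypothesis, this compression is positive definite with eigenvalues bounded away from $0$ and bounded above, uniformly in $\gamma$. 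On $\mathcal{R}$, the term $\gamma J^TJ$ is positive definite with smallest eigenvalue $\gamma\lambda_{\min*}(J^TJ) \to \infty$. The cross terms $v_0^T\Htilde v_1$ (with $v_0\in\mathcal{N}$, $v_1\in\mathcal{R}$) are bounded, so a perturbation/interlacing argument shows that for $\gamma$ large, $\Hgam$ has exactly $n-m$ eigenvalues that stay in a fixed compact subinterval of $(0,\infty)$ (the ``$\mathcal{N}$-block'' eigenvalues) and $m$ eigenvalues that grow like $\gamma$.

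First I would make the block decomposition precise: pick an orthonormal basis so that $J = \bmat{0 & B}$ with $B$ an $m\times m$ invertible matrix (or more invariantly, write $Z$ for an orthonormal basis of $\mathcal{N}$ and $Y$ for one of $\mathcal{R}$). Then $\Hgam$ is orthogonally similar to $\bmat{Z^T\Htilde Z & Z^T\Htilde Y \\ Y^T\Htilde Z & Y^T\Htilde Y + \gamma\, Y^TJ^TJY}$, where $Y^TJ^TJY$ is SPD. Next I would establish the two eigenvalue regimes. For the lower group: the $(1,1)$ block $Z^T\Htilde Z$ is SPD and fixed; for the upper group, $\lambda_{\min}$ of the $(2,2)$ block grows without bound. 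A clean way to control everything is Weyl's inequality together with the observation that $\Hgam = \big(\text{fixed symmetric matrix}\big) + \gamma\, J^TJ$, so the $k$-th eigenvalue of $\Hgam$ lies within $\|\Htilde\|$ of the $k$-th eigenvalue of $\gamma J^TJ$. Since $\gamma J^TJ$ has $n-m$ zero eigenvalues and $m$ eigenvalues in $[\gamma\lambda_{\min*}(J^TJ),\,\gamma\lambda_{\max}(J^TJ)]$, for $\gamma$ large enough ($\gamma \ge \gamma_{\max}$ for a suitable threshold) the smallest $m$ eigenvalues of $\Hgam$ bracket between $\gamma\lambda_{\min*}(J^TJ)-\|\Htilde\|$ is wrong sign — I need the $n-m$ small ones bounded below by $\lambda_{\min}(Z^T\Htilde Z)$ minus a cross-term correction, which is where Theorem~\ref{thm:1}'s positivity on $\mathrm{null}(J)$ enters.

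Concretely: once $\gamma$ is large enough that the $(2,2)$ block dominates, a standard Schur-complement / quadratic-form estimate shows $\lambda_{\min}(\Hgam) = \lambda_{\min}(Z^T\Htilde Z) + O(1/\gamma) \to \lambda_{\min*}\big(\text{compression of }\Htilde\text{ to null}(J)\big) > 0$, a positive constant independent of $\gamma$, while $\lambda_{\max}(\Hgam) = \gamma\lambda_{\max}(J^TJ) + O(1)$. Hence
\begin{align*}
  \kappa(\Hgam) = \frac{\lambda_{\max}(\Hgam)}{\lambda_{\min}(\Hgam)}
  = \frac{\gamma\,\lambda_{\max}(J^TJ)+O(1)}{\lambda_{\min}(Z^T\Htilde Z)+O(1/\gamma)},
\end{align*}
which is asymptotically linear in $\gamma$; choosing $\gamma_{\max} \ge \max(\gamma_{\min},0)$ large enough that the $O(1/\gamma)$ correction in the denominator is, say, at most half of $\lambda_{\min}(Z^T\Htilde Z)$, and that the numerator's $O(1)$ term is dominated, makes $\kappa(\Hgam)$ monotonically increasing and in fact bounded between two positive multiples of $\gamma$ for $\gamma \ge \gamma_{\max}$.

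The main obstacle I anticipate is not the growth of $\lambda_{\max}$ (that is immediate from Weyl) but pinning down that $\lambda_{\min}(\Hgam)$ stays bounded \emph{away from zero} uniformly in $\gamma$: one must rule out that the cross-coupling $Z^T\Htilde Y$ combined with the growing $(2,2)$ block drives some eigenvalue down toward $0$. This is exactly controlled by the Schur complement $Z^T\Htilde Z - Z^T\Htilde Y\,(Y^T\Htilde Y + \gamma Y^TJ^TJY)\inv\, Y^T\Htilde Z$, whose correction term is $O(1/\gamma)$ in norm because the inverted block grows; making this rigorous (and extracting the explicit threshold $\gamma_{\max}$) is the one place that needs a careful — though still elementary — estimate. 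A secondary subtlety is that ``$\kappa(\Hgam)$ increases linearly'' should be read asymptotically (as $\gamma\to\infty$, $\kappa(\Hgam)/\gamma$ tends to a positive constant), not that the derivative is literally constant; I would state it in that form to match what the decomposition actually yields.
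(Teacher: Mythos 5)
Your proposal is correct, but it takes a genuinely different and more refined route than the paper. The paper's proof never decomposes the space: it simply sandwiches the two extreme Rayleigh quotients, showing $\lambda_{\min}(\Htilde)+\gamma\lambda_{\max}(J^TJ)\le\lambda_{\max}(\Hgam)\le\lambda_{\max}(\Htilde)+\gamma\lambda_{\max}(J^TJ)$ and $\lambda_{\min}(\Htilde)\le\lambda_{\min}(\Hgam)\le\lambda_{\max}(\Htilde)$, then invokes Theorem~\ref{thm:1} for positivity of $\lambda_{\min}(\Hgam)$ and concludes $\kappa(\Hgam)\propto\gamma$. You instead split $\mathbb{R}^n$ into $\mathrm{null}(J)\oplus\mathrm{range}(J^T)$, apply Weyl's inequality to locate the $m$ growing eigenvalues, and use a Schur-complement estimate to show $\lambda_{\min}(\Hgam)=\lambda_{\min}(Z^T\Htilde Z)+O(1/\gamma)$. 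Your approach costs more machinery but buys sharper information: it identifies the actual asymptotic constant $\lambda_{\max}(J^TJ)/\lambda_{\min}(Z^T\Htilde Z)$ for $\kappa(\Hgam)/\gamma$, and it directly addresses the one delicate point that the paper glosses over --- that $\lambda_{\min}(\Hgam)$ stays bounded \emph{away} from zero uniformly as $\gamma\to\infty$ (the paper's bounds give only $0<\lambda_{\min}(\Hgam)\le\lambda_{\max}(\Htilde)$ for each fixed $\gamma$; a uniform lower bound follows most easily from monotonicity of $\Hgam$ in $\gamma$, which neither you nor the paper states but your Schur-complement limit supplies in stronger form). Your closing remark that ``increases linearly'' must be read asymptotically matches the paper's own reading of ``$\propto\gamma$.'' The only blemish is the garbled self-correcting sentence in your second paragraph about the bracket with the ``wrong sign''; in a final write-up you should delete that false start and keep only the corrected claim that the $n-m$ small eigenvalues are controlled by the compression of $\Htilde$ to $\mathrm{null}(J)$.
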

\begin{proof} %As $\Hgam$ and $\Htilde$ are symmetric,
\begin{gather*}
    \lambda_{\max}(\Hgam)\equiv \max_{\norm{v}_2=1} v^T\Hgam v\leq \max_{\norm{v}_2=1} v^T\Htilde v + \max_{\norm{v}_2=1} v^T(\gamma J^TJ) v \\
    = \lambda_{\max}(\Htilde)+ \gamma\lambda_{\max}(J^TJ),
    \\
    \lambda_{\max}(\Hgam)\geq \min_{\norm{v}_2=1} v^T\Htilde v + \max_{\norm{v}_2=1} v^T(\gamma J^TJ) v=\lambda_{\min}(\Htilde)+ \gamma\lambda_{\max}(J^TJ).
\end{gather*}
Hence $\lambda_{\min}(\Htilde)+ \gamma \lambda_{\max}(J^TJ)\leq \lambda_{\max}(\Hgam)\leq \lambda_{\max}(\Htilde)+ \gamma \lambda_{\max}(J^TJ)$, meaning $\lambda_{\max}(\Hgam) \propto \gamma$ for large enough $\gamma$ (defined as $\gamma \geq \gamma_{\max}\geq \max(\gamma_{\min},0)$).
Similarly, 
\begin{gather*}
    \lambda_{\min}(\Hgam)\equiv \min_{\norm{v}_2=1} v^T\Hgam v\geq \min_{\norm{v}_2=1} v^T\Htilde v + \min_{\norm{v}_2=1} v^T(\gamma J^TJ)v =\lambda_{\min}(\Htilde),
    \\
    \lambda_{\min}(\Hgam)\leq \max_{\norm{v}_2=1} v^T\Htilde v + \min_{\norm{v}_2=1} v^T(\gamma J^TJ) v=\lambda_{\max}(\Htilde).
\end{gather*}
Thus $\lambda_{\min}(\Htilde)\leq \lambda_{\min}(\Hgam)\leq \lambda_{\max}(\Htilde)$. From \cref{thm:1}, $\gamma\geq \gamma_{\min}\Rightarrow\lambda_{\min}(\Hgam)>0$,  so that $\kappa(\Hgam)=\lambda_{\max}(\Hgam)/\lambda_{\min}(\Hgam)\propto \gamma$ for $\gamma\geq \gamma_{\max}$.
\end{proof}

\begin{comment}
\begin{proof} $\Hgam=\Htilde +\gamma J^TJ$ is real symmetric.  
For large enough $\gamma_{\max}$, the approximation $
  \lambda_{\max}(\Hgam)\approx \lambda_{\max}( \gamma \, J^T J)=\gamma \lambda_{\max}( J^T J)
$ holds for any  $\gamma\geq\gamma_{\max}$. If there is an eigenvector of $\Htilde$ completely in null($J^TJ$) then $\lambda_{\min}(\Hgam)$ stays constant. In this case, let $\lambda_z(\Htilde)$ be the minimum eigenvalue on null($J$). Then, $\lambda_{\min}(\Hgam(\gamma))=\lambda_{z}(\Htilde)$ and $\kappa(\Hgam)=\lambda_{\max}\left(\Hgam(\gamma)\right)/\lambda_{z}(\Htilde)\approx \gamma \lambda_{\max}( J^T J)/\lambda_{z}(\Htilde))$. 
Otherwise, $\lambda_{\min}(\Hgam)$ increases with $\gamma$ but may be bounded by some value, in this case $\kappa(\Hgam)\propto \gamma$ as before. The remaining option is $\lambda_{\min}(\Hgam)$ increasing sublinearly with $\gamma$, which corresponds to a sublinear increase of $\lambda_{\min}(\Hgam)$ with $\gamma$. Since we know that $\kappa(\Hgam)\rightarrow \infty$ for $\gamma \rightarrow \infty$, by continuity $\lambda_{\min}(\Hgam)$ cannot increase linearly (or quicker).
\end{proof}
\end{comment}

\begin{corollary}
\label{cor:1}
Among $\gamma$s such that $\Hgam$ is SPD ($\gamma\ge \gamma_{\min}$), $\kappa(\Hgam)$ is minimized for some $\gamma \in [\gamma_{\min},\gamma_{\max}]$. 
\end{corollary}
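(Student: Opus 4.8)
The plan is to regard $g(\gamma)\equiv\kappa(\Hgam)=\lambda_{\max}(\Hgam)/\lambda_{\min}(\Hgam)$ as a scalar function of $\gamma$ on the feasible set $\Gamma\equiv\{\gamma\ge\gamma_{\min}:\Hgam\text{ is SPD}\}$ and to show it attains its infimum at a point of $[\gamma_{\min},\gamma_{\max}]$. By \cref{thm:1}, if $\Htilde$ is positive definite on $\mathrm{null}(J)$ then $\Gamma$ is the whole half-line $\{\gamma\ge\gamma_{\min}\}$ with no gaps, and otherwise $\Gamma=\varnothing$ and the statement is vacuous; so I assume the former. First I would note that $g$ is continuous on $\Gamma$: since $\Hgam=\Htilde+\gamma J^TJ$ depends linearly, hence continuously, on $\gamma$, every eigenvalue of $\Hgam$ is continuous in $\gamma$, in particular $\lambda_{\max}(\Hgam)$ and $\lambda_{\min}(\Hgam)$, and $\lambda_{\min}(\Hgam)>0$ on $\Gamma$ by \cref{thm:1}, so the quotient is continuous.

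Next I would describe the behaviour of $g$ near the two ends of $\Gamma$. As $\gamma\to\infty$, the estimates in the proof of \cref{thm:2} sandwich $\lambda_{\max}(\Hgam)$ between the two increasing affine functions $\lambda_{\min}(\Htilde)+\gamma\lambda_{\max}(J^TJ)$ and $\lambda_{\max}(\Htilde)+\gamma\lambda_{\max}(J^TJ)$ while keeping $\lambda_{\min}(\Hgam)$ inside the fixed band $[\lambda_{\min}(\Htilde),\lambda_{\max}(\Htilde)]$; in particular $g(\gamma)\to\infty$, and $\gamma_{\max}$ may be taken large enough that $g$ is increasing on $[\gamma_{\max},\infty)$ (justified in the last paragraph). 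At the other end, if $\Hgam$ is merely positive semidefinite at $\gamma=\gamma_{\min}$ then $\lambda_{\min}(\Hgam)\to0$ while $\lambda_{\max}(\Hgam)$ stays bounded away from $0$ as $\gamma\to\gamma_{\min}^{+}$, so $g(\gamma)\to\infty$ there too; one then simply works on $[\gamma_{\min}+\delta,\gamma_{\max}]$ for a small $\delta>0$ chosen so that $g>g(\gamma_{\max})$ on $(\gamma_{\min},\gamma_{\min}+\delta)$.

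With these facts in hand I would conclude by the standard compactness (extreme value theorem) argument: $g$ is continuous on the compact interval $[\gamma_{\min},\gamma_{\max}]$, so it attains a minimum value $g(\gamma^{*})$ at some $\gamma^{*}\in[\gamma_{\min},\gamma_{\max}]$; for every $\gamma>\gamma_{\max}$ monotonicity gives $g(\gamma)>g(\gamma_{\max})\ge g(\gamma^{*})$, and $\gamma<\gamma_{\min}$ is infeasible. Hence $\gamma^{*}$ minimizes $\kappa(\Hgam)$ over all of $\Gamma$, which is the assertion.

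The main obstacle is the tail step: ruling out that $\kappa(\Hgam)$ dips below $g(\gamma^{*})$ for some $\gamma>\gamma_{\max}$. The quickest route is to lean on \cref{thm:2}, choosing $\gamma_{\max}$ large enough that $\kappa(\Hgam)$ is genuinely increasing there; a self-contained justification uses the fact that as $\gamma\to\infty$ the eigenvalues of $\Hgam$ that remain bounded converge to the eigenvalues of $\Htilde$ restricted to $\mathrm{null}(J)$ at rate $O(1/\gamma)$, so $\lambda_{\min}(\Hgam)$ increases to the least such eigenvalue $\mu_{0}>0$ with derivative $O(1/\gamma^{2})$, while $\lambda_{\max}(\Hgam)$ grows with slope tending to $\lambda_{\max}(J^TJ)>0$; differentiating $g=\lambda_{\max}/\lambda_{\min}$ then gives $g'(\gamma)\to\lambda_{\max}(J^TJ)/\mu_{0}>0$, so $g$ is eventually strictly increasing. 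The only other minor point, the possible blow-up of $g$ at the left endpoint, is dispatched by the coercivity remark in the previous paragraph.
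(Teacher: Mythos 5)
Your argument is correct and is essentially the one the paper intends: the corollary is stated without proof as a direct consequence of \cref{thm:1} and \cref{thm:2}, via continuity of $\gamma\mapsto\kappa(\Hgam)$ on the feasible set and the extreme value theorem on $[\gamma_{\min},\gamma_{\max}]$, with \cref{thm:2} ruling out the tail. Your extra care at the tail is warranted and well handled --- the proof of \cref{thm:2} literally establishes only the two-sided linear bounds $\kappa(\Hgam)=\Theta(\gamma)$ rather than monotonicity, so your asymptotic argument that $\kappa(\Hgam)$ is eventually strictly increasing (or, alternatively, just that $\kappa(\Hgam)\to\infty$ so the minimizer lies in a compact set) is exactly the detail needed to close the statement as written.
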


In practice, the optimizer may provide  systems where $\Htilde$ is not SPD on null($J$). In this case we can regularize $\Hgam$ by using $\Hdel=\Hgam + \delta_1 I$ instead.
%of $\Hgam$
Unlike $\gamma$, the introduction of $\delta_1$ changes the solution of the system, so it is essential to keep $\delta_1$ as small as possible. If $\Hgam$ is not SPD, we set $\delta_1=\delta_{\min}$, a parameter for some minimum value of regularization. If $\Hdel$ is still not SPD, we double $\delta_1$ until it is. This ensures we use the minimal value of regularization (to within a factor of 2) needed to make $\Hdel$ SPD. 

If $\delta_1$ proves to be large, which can happen before we are close to a solution, it is essential for the optimizer to be informed to allow it to modify the next linear system.
When the optimizer nears a solution, $\delta_1$ will not need to be large.

In our tests, $\delta_1$ starts at $0$
for the next matrix in the sequence, but is set back to its previous value if the factorization fails.  

We also set $\delta_{\max}$, the maximum allowed $\delta_1$ before we resort to \LDLT factorization of $K_k$ or return to the optimization solver.

If $J$ has low row rank, $S$ in \eqref{Schur1} is SPSD. In this case, CG will succeed if \eqref{Schur1} is consistent. Otherwise, it will encounter a near-zero quadratic form. We then restart CG on the regularized system $(J \Hdel\inv J^T+\delta_2 I)\Delta
y=Jw-r_{y}$. In this way, $\delta_1$ regularizes the $(1,1)$ block and $\delta_2$ regularizes the $(2,2)$ block.
%When $\delta_1> 0$, we regularize $S$ in \eqref{Schur1} by adding $\delta_2 I$, where $\delta_2=\delta_{\min}$, to make sure $S$ is nonsingular.  In this way, $\delta_1$ regularizes the $(1,1)$ block and $\delta_2$ regularizes the $(2,2)$ block. It is possible for $\delta_1=0$ and $S$ to be positive semidefinite if $J$ has low row rank. In this case, CG will succeed if the system is consistent. Otherwise, it will encounter a near-zero quadratic form, in which case we could set $\delta_2=\delta_{\min}$ and restart CG on the regularized system.

 To ensure we can judge the size of parameters $\gamma$ and $\delta_{\min}$ relative to system \eqref{linsys2x2mod}, we first scale \eqref{linsys2x2mod} with a symmetrized version of Ruiz scaling \cite{Ruiz01ascaling}.
%\Red{End of section with changes}

\Cref{alg:CGschur} is a generalization of the Uzawa iteration~\cite{Uzawa} for KKT systems with a (1,1) block that is not necessarily positive definite. It gives a method for solving a sequence of systems \{\eqref{Schur1}--\eqref{Schur2}\} with $Q=\gamma I$ used in the calculation of $\Hgam$. The workflow is similar to~\cref{fig:workflow} except only $\Hdel$ is factorized. On lines 16--19, $\Hdel\inv$ is applied by direct triangular solves with the Cholesky factors $L$, $L^T$ of $\Hdel$. Each iteration of the CG solve on line 17 requires multiplication by $J^T$, applying $\Hdel\inv$, multiplying by $J$, and adding a scalar multiple of the original vector.
\Cref{sec:itdirect} shows why complete Cholesky factorization of $\Hdel$ was chosen.

The rest of the section discusses other important bounds that $\gamma$ must obey. However, selecting an ideal $\gamma$ in practice is difficult and requires problem heuristics (like $\gamma=\norm{\Htilde}/\norm{J}^2$ in \cite{GV4}) or trial and error.

\begin{algorithm}[t]%[t]  % Algorithm 1
\begin{algorithmic}[1]

\FOR {each matrix in the sequence such as in \eqref{linsys2x2mod}}
\STATE  $\delta_1\gets 0$
%\IF {$\delta_{\min} \le \delta_{\max}$}
\STATE $\Hdel= \Hgam$ ($\gamma$ used in the calculation of $\Hgam$)
%\STATE Fail $\gets$ False
\STATE Try $LL^T=\texttt{chol}(\Hdel)$ (Fail $\gets$ False if factorized, True otherwise)
\WHILE {Fail and $\delta_1<=\delta_{\max}/2$} 
\IF {$\delta_1==0$}
 \STATE$\delta_1\gets \delta_{\min}$
\ELSE 
\STATE $\delta_1=\delta_{\min}\gets 2\delta_{\min}$

\ENDIF
\STATE $\Hdel= \Hgam+\delta_1 I$
\STATE Try $LL^T=\texttt{chol}(\Hdel)$ (Fail $\gets$ False if factorized, True otherwise)
\ENDWHILE
\IF{Fail==False}
\STATE Direct solve $\Hdel w=\rhat_x$ 
\IF {CG on $(J \Hdel\inv J^T)\Delta
y=Jw-r_{y}$ produces a small quadratic form}
\STATE CG solve $(J \Hdel\inv J^T+\delta_2 I)\Delta
y=Jw-r_{y}$ (perturbed \eqref{Schur1})
\ENDIF
%\hspace*{5pt}
\STATE Direct solve \hbox{$\Hdel \Delta x = \rhat_x-J^T\Delta y$\hspace*{65pt}} (perturbed \eqref{Schur2})
\ELSE
\STATE Use \LDLT to solve \eqref{linsys4x4} or return problem to optimization solver
\ENDIF
\ENDFOR
\end{algorithmic}
\caption{Using CG on Schur complement to solve the block system \eqref{linsys2x2mod} by solving \eqref{Schur1}--\eqref{Schur2}. $\Hdel$ is a nonsingular perturbation of $\Hgam=\Htilde + \gamma J^TJ$.
Typical parameters: $\gamma = 10^{4}$, $\delta_{\min}=\delta_{2}= 10^{-9}$, $\delta_{\max}=10^{-6}$.}
\label{alg:CGschur}
\end{algorithm}

\subsection{Guaranteed descent direction}

Optimization solvers typically use an \LDLT factorization to solve the $N \times N$ system \eqref{linsys4x4} at each step because (with minimal extra computation) it supplies the inertia of the matrix. A typical optimization approach treats each of the four $2\times 2$ block of~\eqref{linsys4x4} as one block, accounting for the possible regularization applied to $\Hgam$.
%. 
We mean that the (1,1) block {$H_K\equiv$\footnotesize$\bmat{ H+D_x+\delta_1 I&  \\ & D_s}$} is a Hessian of dimension $n$, and the (2,1) block {$J_K\equiv$\footnotesize$\bmat{ J&  \\ J_d& -I}$} represents the constraint Jacobian and has dimensions $m \times n$. The inertia being $(n,m,0)$ implies that (a) $H_K$ is uniformly SPD on null($J_K$) for all $k$, meaning $v^TH_Kv\geq \epsilon >0$ for all vectors $v$ satisfying $J_Kv=0$, iterations $k$, and some positive constant~$\epsilon$; (b) KKT matrix in \eqref{linsys4x4} is nonsingular. Together these conditions are sufficient to ensure a descent direction and fast convergence in the optimization algorithm~\cite{wachter2006implementation,NocW2006}. We show that \cref{alg:CGschur} ensures properties (a) and (b) without computing the inertia of the KKT matrix. %

\begin{theorem} % Theorem 3
\label{thm:3}
If \cref{alg:CGschur} succeeds with $\delta_2=0$, it provides a descent direction for the interior method for large enough $\gamma$.
\end{theorem}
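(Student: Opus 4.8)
The plan is to reduce the claim to a statement about the inertia of the regularized KKT matrix in \eqref{linsys4x4}: I will show that success of \cref{alg:CGschur} with $\delta_2=0$ forces properties (a) and (b) stated above — equivalently, that $\bmat{H_K & J_K^T \\ J_K & 0}$ has inertia $(n,m,0)$ — and then invoke the standard result \cite{wachter2006implementation,NocW2006} that (a) together with (b) certifies that the primal part $(\Delta x,\Delta s)$ of the computed step is a descent direction. Since the theorem only asserts a descent direction at the current iterate, per-iteration positive definiteness on the constraint null space will suffice; the uniform-in-$k$ constant $\epsilon$ in (a) is not needed here.

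First I would unpack the hypothesis. ``Succeeds'' means the \texttt{chol} call in \cref{alg:CGschur} returns without failure, i.e. $\Hdel=\Htilde+\gamma J^TJ+\delta_1 I$ is SPD; by \cref{thm:1} applied with $\Htilde+\delta_1 I$ in place of $\Htilde$, this is exactly the condition that $\gamma$ be large enough and that $\Htilde+\delta_1 I$ be positive definite on null($J$) — which is where ``for large enough $\gamma$'' enters. ``Succeeds with $\delta_2=0$'' I read as: the CG solve is run without encountering a near-zero quadratic form, so the Schur complement $S=J\Hdel\inv J^T$ is positive definite; since $\Hdel\inv$ is SPD, this forces $J$ to have full row rank. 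I would also note that passing from \eqref{linsys2x2} to \eqref{linsys2x2mod} (adding $\gamma J^TJ$ to the $(1,1)$ block and $J^TQr_y$ to the right-hand side) is an equivalence transformation — using $J\Delta x=r_y$ from the second block row — so the step returned is precisely the step of the $\delta_1$-regularized systems \eqref{linsys2x2} and \eqref{linsys4x4}; hence it is legitimate to argue in the $4\times 4$ picture with only $\delta_1$ present.

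Next comes the core computation. With the $\delta_1$-regularization the matrix in \eqref{linsys4x4} has $(1,1)$ block $H_K=\bmat{H+D_x+\delta_1 I & \\ & D_s}$ and $(2,1)$ block $J_K=\bmat{J & \\ J_d & -I}$. Since $J_K^Tw=0$ forces $w_2=0$ and then $J^Tw_1=0$, the matrix $J_K$ has full row rank iff $J$ does, so $J_K$ has full row rank. For any $v=(v_x,v_s)\ne 0$ with $J_Kv=0$ we have $Jv_x=0$ and $v_s=J_dv_x$, hence
\[
  v^TH_Kv = v_x^T(H+D_x+\delta_1 I)v_x + v_x^TJ_d^TD_sJ_dv_x = v_x^T(\Htilde+\delta_1 I)v_x = v_x^T\Hdel v_x,
\]
the last equality because $Jv_x=0$ annihilates the $\gamma J^TJ$ term. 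Moreover $v_x\ne 0$ (else $v_s=J_dv_x=0$ and $v=0$), so $v^TH_Kv>0$ by SPD-ness of $\Hdel$; thus $H_K$ is positive definite on null($J_K$), which is (a).

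Finally, $H_K$ positive definite on null($J_K$) together with $J_K$ of full row rank is the classical sufficient condition for $\bmat{H_K & J_K^T \\ J_K & 0}$ to be nonsingular with inertia $(n,m,0)$, which gives (b). Properties (a) and (b) are exactly the hypotheses under which \cite{wachter2006implementation,NocW2006} guarantee a descent direction, completing the proof. I expect the main obstacle to be not any calculation but making the hypothesis airtight: precisely characterizing what ``succeeds with $\delta_2=0$'' is allowed to assume — in particular deducing full row rank of $J$ from the absence of a near-zero quadratic form in CG — and stating the inertia lemma at the right level of generality so that the cited descent result applies verbatim.
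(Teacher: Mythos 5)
Your proof is correct, and it reaches the key property (a) by a genuinely different route than the paper. The paper forms the augmented matrix $\Hkgam \equiv H_K + \gamma_K J_K^TJ_K$ with a possibly larger parameter $\gamma_K \ge \gamma$, splits the quadratic form $u^T\Hkgam u$ according to whether $w = J_du_1 - u_2$ vanishes, argues positivity in each case (the case $w\neq 0$ requiring ``large enough $\gamma_K$''), and only then applies the easy direction of \cref{thm:1} with $(H_K, J_K)$ in place of $(\Htilde, J)$ to conclude that $H_K$ is positive definite on null($J_K$). You instead parametrize null($J_K$) explicitly --- $Jv_x = 0$ and $v_s = J_dv_x$ --- and compute $v^TH_Kv = v_x^T(\Htilde+\delta_1 I)v_x = v_x^T\Hdel v_x > 0$ directly. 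This is more elementary and arguably tighter: it needs no second augmentation parameter and no case split, and it sidesteps the weakest step of the paper's argument (in the $w\neq 0$ case, positivity of $\gamma_K w^Tw$ does not by itself dominate the possibly negative remaining terms without a uniformity argument over the unit sphere; restricting to the null space makes that issue moot). For property (b) the paper argues nonsingularity of \eqref{linsys4x4} informally via block Gaussian elimination, whereas you invoke the standard inertia lemma (full row rank of $J_K$ plus $H_K$ positive definite on null($J_K$) gives inertia $(n,m,0)$), which is cleaner and connects verbatim to the cited descent results. Both proofs share the same delicate point --- treating ``succeeds with $\delta_2=0$'' as implying full row rank of $J$ --- but you at least flag it explicitly and justify it via positive definiteness of $S = J\Hdel\inv J^T$, while the paper simply asserts it.
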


\begin{proof}
By construction, $\Hdel$ is uniformly SPD (because the Cholesky factorization was successful for all iterations of the solver until this point) and the KKT system in \eqref{linsys2x2mod} is nonsingular (because $J$ has full row rank, or $\delta_2 I$ was added to $S$ to shift it from SPSD to SPD). 
Therefore, \cref{alg:CGschur} provides a descent direction for \eqref{linsys2x2mod} even if regularization is added. The rest of the proof assumes $\delta_2=0$, and we return to the other case later. Since $J$ has full row rank, $H+D_x+ \delta_1 I$ is nonsingular by assumption, all block rows in~\eqref{linsys4x4} have full rank internally, and Gaussian elimination cannot cause cancellation of an entire block row, we conclude that \eqref{linsys4x4} is nonsingular. Let $\Hkgam\equiv H_K+\gamma_K J_K^TJ_K$ (for $\gamma_K\geq \gamma$ used in \cref{alg:CGschur}). For any nonzero vector $u^T=(u_1^T,  u_2^T)$ with $u_1$ and $u_2$ of dimensions $\nx$ and $\md$, 
$$
    u^T \Hkgam u = u_1^T (H+D_x+\gamma_K J^TJ)u_1 + u_2^TD_su_2+ \gamma_K w^Tw,
$$
where $w=J_du_1-u_2$. If $w=0$, then $u_2=J_du_1$, which means $u^T\Hkgam u\geq u_1^T\Hgam u_1\geq \epsilon>0$ and the proof is complete (with $\gamma_K=\gamma$). Otherwise, $\gamma_w\equiv \gamma_K w^Tw>0$. So for large enough $\gamma_K$,  $u^T\Hkgam u\geq \epsilon>0$. Applying \cref{thm:1} to \eqref{linsys4x4} with $H_K$ and $J_K$ replacing $\Htilde$ and $J$ shows that $H_K$ is positive definite on null($J_K$).
\end{proof}

We note that $\delta_1$ corresponds to the so-called primal regularization of the filter line-search algorithm~\cite{wachter2006implementation}. Under this algorithm, whenever $\delta_1$ becomes too large, one can invoke the feasibility restoration phase of the filter line-search algorithm~\cite{wachter2006implementation} as an alternative to performing the  $\LDLT$ factorization on the CPU. Feasibility restoration ``restarts'' the optimization at a point with more favorable numerical properties. We also note that when $\delta_1$ is sufficiently large, the curvature test used in \cite{Chiang2016} should be satisfied. Hence, inertia-free interior methods have the global convergence property without introduction of other regularization from the outer loop. 

The $\delta_2$ regularization is a numerical remedy for low-rank $J$, caused by redundant equality constraints. This regularization is similar to the so-called dual regularization used in Ipopt~\cite{wachter2006implementation} and specifically addresses the issue of rank deficiency; however, there is no direct analogue from a $\delta_2$ regularization in \eqref{linsys2x2mod} to Ipopt's dual regularization in~\eqref{linsys4x4} and neither of the two heuristics guarantees a descent direction for~\eqref{linsys4x4}. Given the similarity between the two heuristics, we believe that the $\delta_2$ regularization can be effective within the filter line-search algorithm.

When \cref{alg:CGschur} is integrated with a nonlinear optimization solver such as \cite{waecther_05_ipopt} and \cite{HiOp}, the while loop (Line 5) in \cref{alg:CGschur} can be removed and the computation of $\delta_1$ and $\delta_2$ can be decided by the optimization algorithm. The development of robust heuristics that allow \cref{alg:CGschur}'s $\delta_1$ and $\delta_2$ regularization  within the filter line-search algorithm will be subject of future work.
 %Similar approaches have been utilized in~\cite{Vanderbei_nonconvex,waechter_local,waecther_05_ipopt}.

\subsection{Convergence for large $\gamma$}
In \cref{sec:algorithm} we showed that $\Hgam$ is SPD for large enough $\gamma$, and that $\gamma$ should not be so large that the low-rank term $\gamma (J^TJ)$ makes $\Hgam$ ill-conditioned.  Here we show that in order to decrease the number of CG iterations, it is beneficial to increase $\gamma$ beyond what is needed for $\Hgam$ to be SPD.

\begin{theorem} % Theorem 4
\label{thm:4}
 In exact arithmetic, for $\gamma \gg 1$, the eigenvalues of $S_\gamma \equiv \gamma S$ converge to $1$ with an error term that decays as $1/\gamma$.  
\end{theorem}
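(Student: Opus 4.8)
Recall $S = J\Hgam\inv J^T$ with $\Hgam = \Htilde + \gamma J^TJ$, so $S_\gamma = \gamma\,J(\Htilde + \gamma J^TJ)\inv J^T$; I use the standing assumptions of this section, namely that $J$ has full row rank with $m<n$ and that $\Htilde$ is positive definite on $\mathrm{null}(J)$, so that $\Hgam$ is SPD and $S$ is well defined once $\gamma$ is large (Theorem~\ref{thm:1}). The plan is to derive an \emph{exact} closed form for $S_\gamma$, from which the limit of its eigenvalues and the $1/\gamma$ rate both fall out immediately. The naive route — writing $\Hgam = \gamma\big(J^TJ + \tfrac{1}{\gamma}\Htilde\big)$ and letting $\gamma\to\infty$ — fails because $J^TJ$ is singular when $m<n$, so the row space and the null space of $J$ must be treated separately.

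First I would use the SVD $J = U\bmat{\Sigma_r & 0}V^T$ with $U$ orthogonal $m\times m$, $V = \bmat{V_1 & V_2}$ orthogonal $n\times n$ such that $\mathrm{range}(V_1) = \mathrm{range}(J^T)$ and $\mathrm{range}(V_2) = \mathrm{null}(J)$, and $\Sigma_r$ diagonal and positive (full row rank). Conjugating, write $V^T\Htilde V = \bmat{A & B \\ B^T & C}$ in blocks of sizes $m$ and $n-m$, so that $C = V_2^T\Htilde V_2$; since $J^TJ = V\bmat{\Sigma_r^2 & 0 \\ 0 & 0}V^T$, we get $V^T\Hgam V = \bmat{A + \gamma\Sigma_r^2 & B \\ B^T & C}$, in which only the leading block depends on $\gamma$. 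By hypothesis $C$ is SPD, hence invertible, so the block-inverse formula makes the $(1,1)$ block of $(V^T\Hgam V)\inv$ equal to $(\gamma\Sigma_r^2 + G)\inv$ with $G \equiv A - B C\inv B^T$ \emph{independent of $\gamma$}. Substituting the SVD back into $S_\gamma = \gamma J\Hgam\inv J^T$ leaves only this block sandwiched by $\Sigma_r$, and factoring $\gamma\Sigma_r^2 + G = \Sigma_r(\gamma I + \Gtilde)\Sigma_r$ with $\Gtilde \equiv \Sigma_r\inv G\,\Sigma_r\inv$ (symmetric, $\gamma$-independent) collapses everything to
\[ S_\gamma \;=\; \gamma\,U\,(\gamma I + \Gtilde)\inv U^T. \]

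The conclusion then drops out. For $\gamma$ in the SPD regime, $V^T\Hgam V$ and its principal Schur complement $\Sigma_r(\gamma I + \Gtilde)\Sigma_r$ are SPD, so $\gamma I + \Gtilde$ is SPD and the eigenvalues $\mu_1,\dots,\mu_m$ of $\Gtilde$ satisfy $\gamma + \mu_i > 0$. Since $S_\gamma$ is orthogonally similar to $\gamma(\gamma I + \Gtilde)\inv$, its eigenvalues are exactly $\gamma/(\gamma+\mu_i)$, and
\[ \frac{\gamma}{\gamma + \mu_i} \;=\; \frac{1}{1 + \mu_i/\gamma} \;=\; 1 - \frac{\mu_i}{\gamma} + O(\gamma^{-2}), \]
so each eigenvalue of $S_\gamma$ converges to $1$ with error $\mu_i/\gamma + O(\gamma^{-2}) = O(1/\gamma)$; in particular $\kappa(S) = \kappa(S_\gamma) \to 1$ as $\gamma\to\infty$.

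I expect the only real obstacle to be structural: because $J$ is rectangular with $m<n$, one cannot work with $(J^TJ)\inv$, nor apply the Woodbury identity to $\Htilde\inv$ (which need not exist, $\Htilde$ being only assumed positive definite on $\mathrm{null}(J)$). Passing to the SVD and isolating the $\gamma$-independent Schur complement $C = V_2^T\Htilde V_2$ — whose invertibility is exactly what the hypothesis guarantees — is what makes the dependence on $\gamma$ transparent; the block inversion and the scalar Neumann expansion afterward are routine.
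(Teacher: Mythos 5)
Your proof is correct, but it takes a genuinely different route from the paper's. The paper applies the Searle identity $(A+BB^T)\inv B^T=A\inv B(I+B^TA\inv B)\inv$ with $A=\Htilde/\gamma$ and $B=J^T$ to obtain $S_\gamma=(I+C)\inv$ with $C=\frac{1}{\gamma}(J\Htilde\inv J^T)\inv$, and then expands $(I+C)\inv$ in a Neumann series to get $S_\gamma=I-C+O(1/\gamma^2)$; this is shorter and identifies the first-order correction explicitly as $\frac{1}{\gamma}(J\Htilde\inv J^T)\inv$, but it leans on $\Htilde$ being nonsingular (and on $J\Htilde\inv J^T$ being invertible) to even write down $C$. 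You instead pass to the SVD of $J$, split $\mathbb{R}^n$ into $\mathrm{range}(J^T)\oplus\mathrm{null}(J)$, and eliminate the nullspace block via the Schur complement $G=A-BC\inv B^T$, arriving at the exact similarity $S_\gamma=\gamma U(\gamma I+\Gtilde)\inv U^T$ and hence the exact eigenvalues $\gamma/(\gamma+\mu_i)$. This buys you two things the paper's argument does not: the hypothesis you actually use is the section's standing assumption that $\Htilde$ is positive definite on $\mathrm{null}(J)$ (so $C=V_2^T\Htilde V_2$ is invertible) rather than nonsingularity of $\Htilde$ itself, and you get a closed-form spectrum valid for every $\gamma$ in the SPD regime rather than only an asymptotic expansion — from which $\kappa(S)\to 1$ and the $1/\gamma$ rate follow by a scalar Taylor expansion instead of a matrix Neumann series. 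The trade-off is that the paper's first-order term $-C$ is expressed directly in terms of the problem data $J$ and $\Htilde$, whereas your $\mu_i$ are eigenvalues of the less transparent $\Gtilde=\Sigma_r\inv(A-BC\inv B^T)\Sigma_r\inv$. Both arguments are sound; yours is the more robust of the two.
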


\begin{proof}
By definition,
\begin{align}
   S_\gamma = \gamma J(\Htilde +\gamma J^TJ)\inv J^T=J\left (\frac{\Htilde}{\gamma}+J^TJ\right)\inv J^T.
   \label{eq:S1}
\end{align}
  Since $\Htilde$ is nonsingular and $J$ has full row rank by assumption,
  the Searle identity $(A+BB^T)\inv B^T=A\inv  B(I+B^TA\inv B)\inv$~\cite{Searle}
%to simplify \eqref{eq:S1}, 
with $A\equiv \Htilde/\gamma$ and $B\equiv J^T$
gives
\begin{align*}
   S_\gamma=\gamma J\Htilde\inv J^T(I+\gamma J\Htilde\inv J^T)\inv=\left(I+C\right)\inv,
   \quad
   C \equiv \frac{1}{\gamma} \left(J\Htilde\inv J^T\right)\inv.
   %\label{eq:S2}
\end{align*}
For $\gamma \rightarrow \infty$, $\left|\lambda_{i} (C)\right| \ll 1 \ \forall i$. %We can use the Neuman
% Is it Neumann?  I can't find either with Google
The identity $(I+C)\inv =\Sigma_{k=0}^{\infty} (-1)^k C^k$, which applies when $\left|\lambda_{i} (C)\right| < 1 \ \forall i$, gives
\begin{align}
   S_\gamma=\sum_{k=0}^{\infty}(-1)^k C^k=I - C + O\left(\frac{1}{\gamma^2}\right).
   \label{eq:S3}
\end{align}
\end{proof}

A different proof of an equivalent result is given by Benzi and Liu \cite[Lemma 3.1]{Benzi07}. 
\begin{corollary}
The eigenvalues of $S$ are well clustered for $\gamma \gg 1$, and the iterative solve in \cref{alg:CGschur} line 16 converges quickly. 
\end{corollary}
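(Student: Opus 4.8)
The plan is to derive the corollary directly from \cref{thm:4} and the standard convergence theory of the conjugate gradient method, with essentially no new computation. First I would note that $S_\gamma=\gamma S$ differs from $S$ only by the positive scalar $\gamma$, so the two matrices share the same condition number and the same \emph{relative} spectral spread; moreover CG is invariant under scaling the system matrix and the right-hand side by a common positive constant, so every statement proved for $S_\gamma$ transfers verbatim to the system actually solved in \cref{alg:CGschur}. From \cref{thm:4} we have $S_\gamma = I - C + O(1/\gamma^2)$ with $C=\tfrac1\gamma(J\Htilde\inv J^T)\inv$, hence every eigenvalue of $S_\gamma$ lies in the interval $\left[\,1-\rho/\gamma-O(1/\gamma^2),\ 1+\rho/\gamma+O(1/\gamma^2)\,\right]$, where $\rho\equiv\norm{(J\Htilde\inv J^T)\inv}_2$ is finite and independent of $\gamma$ (it is fixed data of the current KKT system, since $\Htilde$ is SPD and $J$ has full row rank). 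Therefore $\kappa(S)=\kappa(S_\gamma)=1+O(1/\gamma)\to 1$, i.e.\ the spectrum of $S$ collapses into a single tight cluster about $1$ as $\gamma\to\infty$ — this is the precise meaning of ``well clustered.''

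Next I would invoke the classical CG bounds. When $\delta_1=\delta_2=0$ the matrix appearing in \cref{alg:CGschur}'s iterative solve is exactly $S=J\Hgam\inv J^T$, which is SPD under the hypotheses of \cref{thm:4}, so CG is applicable. The standard estimate gives $\norm{\Delta y-\Delta y_j}_S\le 2\left(\frac{\sqrt{\kappa(S)}-1}{\sqrt{\kappa(S)}+1}\right)^{j}\norm{\Delta y-\Delta y_0}_S$; with $\kappa(S)=1+O(1/\gamma)$ the contraction factor is $O(1/\sqrt{\gamma})$, so a number of iterations that is $O(1)$ in the problem dimension suffices to reach any prescribed tolerance once $\gamma$ is large. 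A sharper statement follows from the polynomial form of the CG error: the $S$-norm of the error after $j$ steps is bounded by $\min_{p\in\mathcal{P}_j,\,p(0)=1}\max_i|p(\lambda_i(S))|$ times the initial error, and since all $\lambda_i(S)$ lie in an interval of relative width $O(1/\gamma)$ about their common mean, already a single degree-one polynomial makes this factor $O(1/\gamma)$. Thus one CG step drives the relative error below $O(1/\gamma)$, which is the quantitative content of ``converges quickly'' and matches the $O(1)$ iteration counts reported in \cref{sec:testing}.

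I do not expect a genuine obstacle; the corollary is a translation of \cref{thm:4} into the language of iteration counts. The only points requiring care in the write-up are the bookkeeping that $\rho$ and the $O(1/\gamma^2)$ remainder of \cref{thm:4} are truly $\gamma$-independent, and the observation that the $\gamma$-scaling between $S$ and $S_\gamma$ is harmless for CG. One caveat worth stating explicitly is that, like \cref{thm:4}, this argument is in exact arithmetic: in finite precision the conditioning of $\Hgam$ enters through the inner triangular solves with the Cholesky factors, which can perturb the effective spectrum of $S$; that is a concern for the accuracy of the inner direct solve (addressed in \cref{sec:itdirect} and \cref{sec:testing}) rather than for the clustering argument itself.
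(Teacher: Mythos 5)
Your argument is correct and follows the same route the paper intends: the corollary is presented as an immediate consequence of \cref{thm:4} (eigenvalues of $S_\gamma=\gamma S$ cluster at $1$ with $O(1/\gamma)$ error, and the positive scaling by $\gamma$ does not affect $\kappa$ or CG), combined with the standard CG bound that the paper itself states as \eqref{CGconv} in \cref{sec:testing}. Your added min--max polynomial refinement and the exact-arithmetic caveat go slightly beyond what the paper records, but they do not change the argument.
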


In the next section, we show that our choices of $\gamma=10^4$--$10^6$ are large enough for the arguments to hold.  This explains the rapid convergence of CG. We also show that our transformation from \eqref{linsys4x4} to \eqref{linsys2x2mod} and our scaling choice are stable on our test cases, by measuring the error for the original system \eqref{linsys4x4}. 

\section{Practicality demonstration}
\label{sec:testing}

We demonstrate the practicality of \cref{alg:CGschur} using five series of linear problems \cite{ACOPF} generated by the Ipopt solver performing optimal power flow analysis 
on the power grid models
summarized in \cref{tab:descr}. We compare our results with a direct solve via MA57's \LDLT factorization~\cite{Duff2004}.    

\begin{table}[b] %[htb]  % Table 1
\caption{\label{tab:descr} Characteristics of the five tested optimization problems, each generating sequences of linear systems $K_k \Delta x_k = r_k$  \eqref{linsys4x4} of dimension $N$. Numbers are rounded to 3 digits. K and M signify $10^3$ and $10^6$.}

\centering
%\smallskip
%\small
\footnotesize

\begin{tabular}{lrr} \toprule
   Name        & $N(K_k)$~~~ & \nnz($K_k$)
\\ \midrule
%This is TAMU500
   South Carolina grid     &    56K &   411K
\\ Illinois grid  &  4.64K &  21.6K
\\ Texas grid &  55.7K &   268K
\\ US Western Interconnection grid  &   238K &  1.11M
\\ US Eastern Interconnection grid  &  1.64M &  7.67M
\\ \bottomrule
\end{tabular}
\end{table}

\subsection{$\gamma$ selection}

\begin{figure}[t]   % Fig 2
\centering
  \includegraphics[width=\Width\textwidth]{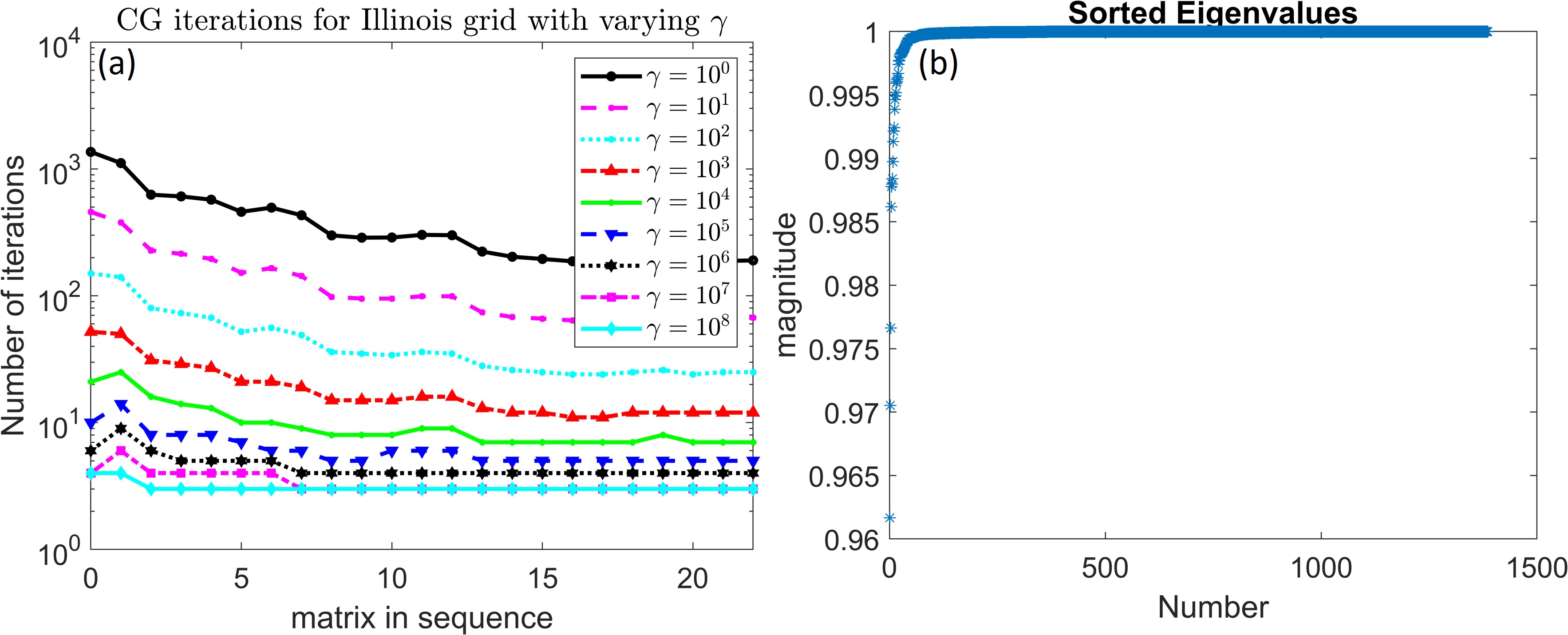}
  \caption{Illinois grid: (a) CG iterations on \cref{Schur1} with varying $\gamma$. $\gamma \geq 10^3$ gives good convergence. The mean number of iterations for $\gamma=10^4$ is $9.4$. (b) Sorted eigenvalues of $S_\gamma \equiv \gamma J\Hgam\inv J^T$ in \eqref{eq:S1} matrix 22, for $\gamma=10^4$. The eigenvalues are clustered close to $1$.}
  \label{fig:alphas200}
\end{figure}

\begin{figure}[tbp]   % Fig 5
\centering
  \includegraphics[width=\Width\textwidth]{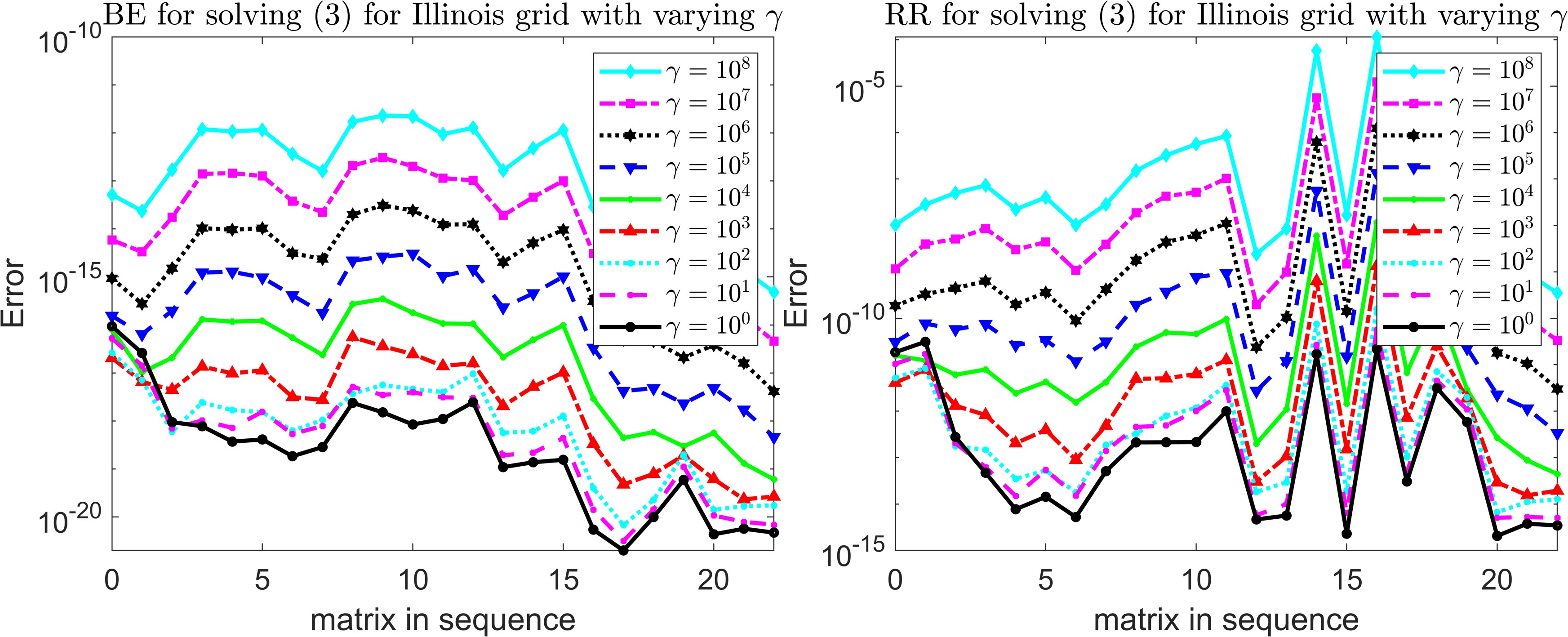}
  \caption{Illinois grid \eqref{linsys4x4}, with varying $\gamma$ in \eqref{eq:gamma}: (a) Backward error (\textbf{BE}) and (b) relative residual (\textbf{RR}). (a) $\gamma \leq 10^4$ gives results close to machine precision. (b) $\gamma \leq 10^4$ has $\text{RR} \le 10^{-8}$. }
  \label{fig:alphas200back44low}
\end{figure}

We use \cref{alg:CGschur} with CG as the iterative method on line 16. A larger $\gamma$ in $\Hgam=\Htilde + \gamma J^TJ$ may improve CG convergence but make $\Hgam$ more ill-conditioned and increase the error in the solution of \eqref{linsys4x4}.  We therefore run some preliminary tests to find a suitable $\gamma$ before selecting $\delta_{\min}$ and testing other problems,
and we require CG to solve accurately (stopping tolerance $10^{-12}$ for the relative residual norm). We start with one of the smaller problems, the Illinois grid, to eliminate some values of $\gamma$ and see if the condition number of $K_k$ for each matrix in the sequence is useful in determining the needed iterations or regularization. We run these tests with $\delta_{\min}=10^{-10}$.  

\Cref{fig:alphas200}(a) shows that for the Illinois grid sequence, values of $\gamma \ge 10^4$ give CG convergence in approximately 10 iterations for every matrix in the sequence.  For the last matrix we found that eigenvalues of $S$ in \eqref{Schur1} are very well clustered and the condition number of $S$ is $\kappa \approx 1.04$, as shown in \cref{fig:alphas200}(b).  This guarantees and explains the rapid convergence because for CG applied to a general SPD system $Ax=b$,
\begin{align}
  \frac{\norm{e_k}_A}{\norm{e_0}_A} \le 2\left(\frac{\sqrt{\kappa}-1}{\sqrt{\kappa}+1}\right)^k,    \label{CGconv}
\end{align}
where $e_k=x-x_k$ is the error in an approximate solution $x_k$ at iteration~$k$ \cite{GV4}.

For the last few matrices, $\gamma=10^8$ is the only value requiring $\delta_1>0$.  The final value of $\delta_1$ was $16\delta_{\min} = 1.6 \cdot 10^{-9}$.
No important information was gleaned from $\texttt{cond}(K_k)$. For all other values of $\gamma$, $\delta_1=0$ for the whole sequence. For a system $Ax=b$ and an approximate solution $\xtilde \approx x$, we define the backward error \textbf{BE} as $\norm{A\xtilde-b}_2/(\norm{A}_2 \, \norm{\xtilde}_2+\norm{b}_2)$ and the relative residual \textbf{RR} as $\norm{A\xtilde-b}_2/\norm{b}_2$. As is  common practice, we use $\norm{A}_\infty$ to estimate $\norm{A}_2$, which is too expensive to calculate directly. $\norm{A}_\infty$  always provides an upper bound for $\norm{A}_2$, but in practice is quite close to the actual value. 
%On the Illinois grid, $\norm{A}_\infty \approx 2 \norm{A}_2$ gives a good lower bound on the BE. 
% This is said already:
%To simplify notation, the BE presented is always calculated with $\norm{A}_\infty$.
Note that MA57 always has a BE of order machine precision.
\begin{comment}

\Cref{fig:hist} shows the eigenvalue distribution for $S \equiv J\Hgam\inv J^T$ of matrix 22 in the Illinois grid. The eigenvalues are well clustered.

\end{comment}
\Cref{fig:alphas200back44low} 
shows the (a) BE and (b) RR for system \eqref{linsys4x4} for varying $\gamma$. Results for the BE and RR of system \eqref{linsys2x2} are not qualitatively different and are given in \cref{appA}. One conclusion is that increasing $\gamma$ to reduce CG iterations can be costly for the accuracy of the solution of the full system. Based on the results of this section, $\gamma$ in the range $10^2$--$10^6$ gives reasonable CG iterations and final accuracy. For other matrices, we present a selected $\gamma$ in this range that produced the best results.

\subsection{Results for larger matrices}

\begin{figure}[t]   % Fig 6
\centering
  \includegraphics[width=0.45\textwidth]{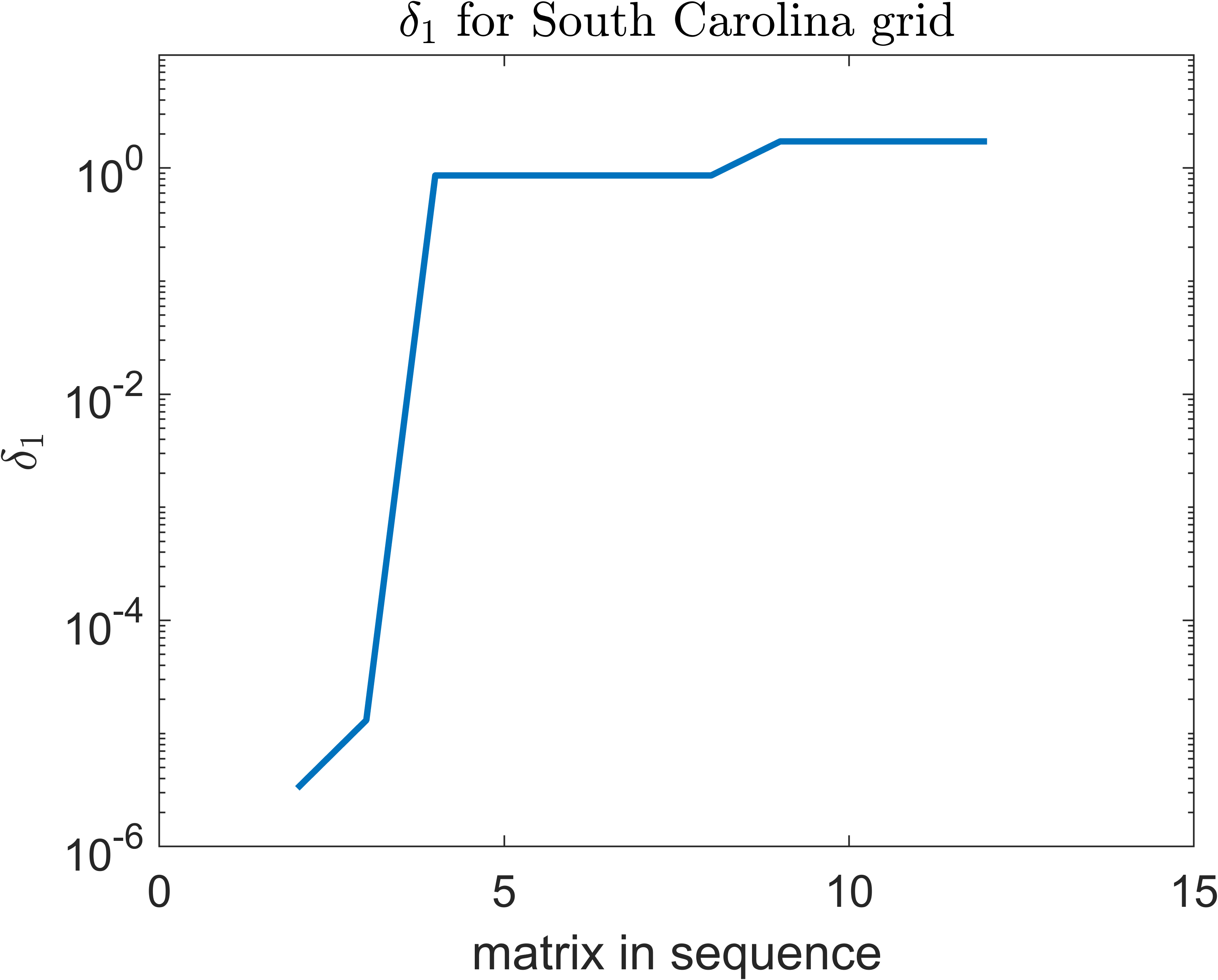}
  \caption{South Carolina grid: $\delta_1$ for $\gamma=10^4$. For other values of $\gamma$ the graph was similar. Except for the first few and last few matrices, $\delta\approx 1$, meaning the required regularization would make the solution too inaccurate. The value of $0$ is omitted on the log-scale. }
  \label{fig:South Carolina grid}
\end{figure}

\begin{figure}[t]   % Fig 7
\centering
  \includegraphics[width=\Width\textwidth]{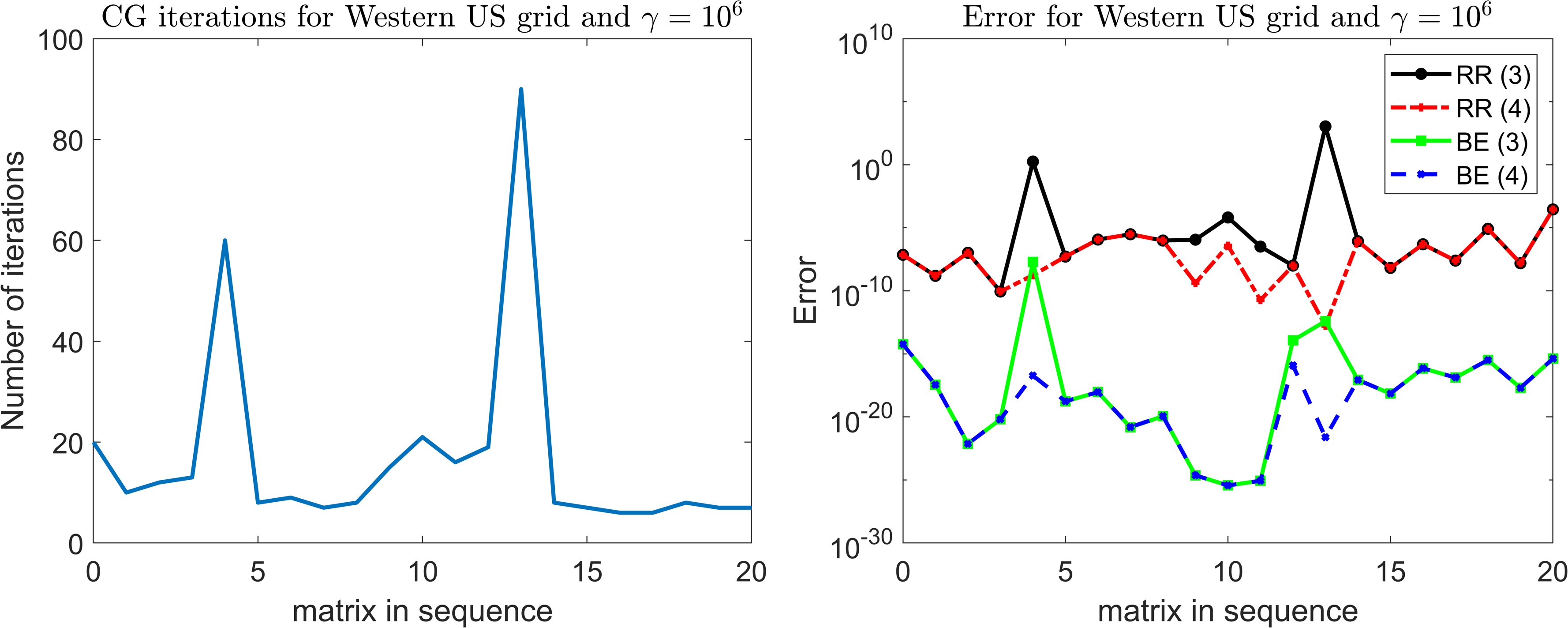}
  \caption{US Western Interconnection grid with $\gamma=10^6$ in \eqref{eq:gamma}: (a) CG iterations on \cref{Schur1}. The mean number of iterations is $17$. (b) BE and RR for the sequence. The BE for \eqref{linsys4x4} is less than $10^{-10}$, except for matrix 4. }
  \label{fig:WesternUS}
\end{figure}

\begin{figure}[t]   % Fig 9
\centering
  \includegraphics[width=\Width\textwidth]{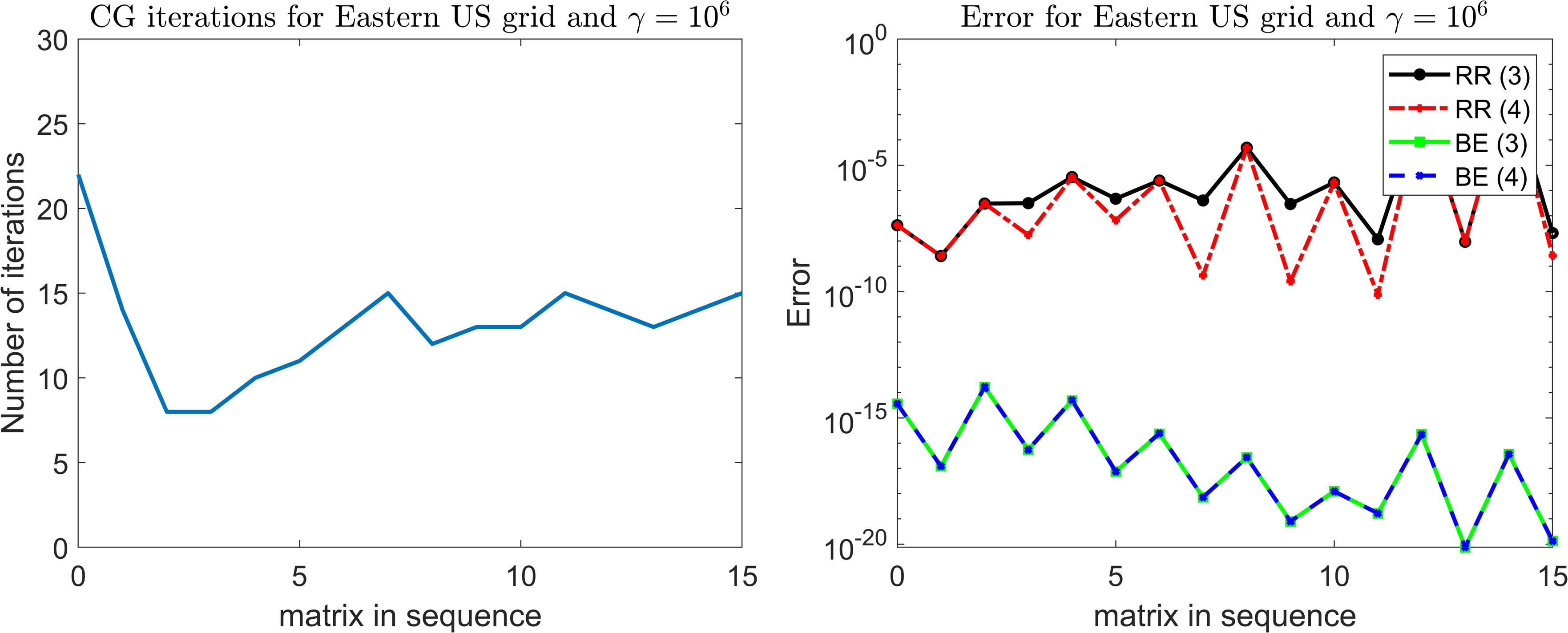}
  \caption{US Eastern Interconnection grid with $\gamma=10^6$ in \eqref{eq:gamma}: (a) CG iterations on \cref{Schur1}. The mean number of iterations is $13.1$. (b) BE and RR for \eqref{linsys4x4} and \eqref{linsys2x2}. The BE for \eqref{linsys4x4} is less than $10^{-10}$. }
  \label{fig:EasternUS}
\end{figure}

Solving larger (and perhaps more poorly conditioned) problems brings about new computational challenges and limits the amount of time any particular task can take. We wish to set $\delta_{\min}$ small enough to avoid over-regularizing the problem, and large enough to eliminate wasteful iterations and numerical issues. We want $\delta_{\max}$ small enough to recognize that we have over-regularized (and should try a different method), but large enough to allow for reasonable regularization. In our numerical tests, we use $\delta_{\min}=10^{-10}$ and $\delta_{\max}$ large enough so that $\delta_1$ can increase until $\Hdel=\Htilde+\delta_1 I$ is SPD.
%(which is guaranteed to happen because it becomes closer to the scalar multiple of the identity)
This informs the parameter selection for the next system.

%We test Algorithm \cref{alg:CGschur} on 6 series of matrices in Table~\cref{tab:descr} that have the assumed structure.

\Cref{fig:South Carolina grid} shows that the South Carolina grid matrices as currently constructed cannot benefit from this method. They need $\delta_1>1$ to make $\Hgam+\delta_1 I$ SPD, which on a scaled problem means as much weight is given to regularization as to the actual problem. \Cref{alg:CGschur} succeeds on the other matrix sequences, at least for certain $\gamma$s, and needs no regularization ($\delta_1=\delta_2=0$).

For the US Western Interconnection grid, \cref{fig:WesternUS}(a) shows a CG convergence graph and (b) shows several types of error. For the US Eastern Interconnection grid, \cref{fig:EasternUS}(a) shows a CG convergence graph and (b) shows several types of error.  Figures for the Texas grid are given in \cref{appA}, as they do not provide more qualitative insight. Convergence occurs for all matrix sequences in less than $20$ iterations on average. 
The BE for \eqref{linsys4x4} is consistently less than $10^{-8}$ and, with two exceptions in the US Western Interconnection grid, is close to machine precision. Results for the US Eastern Interconnection grid show that the method does not deteriorate with problem size, but rather there are some irregular matrices in the US Western Interconnection grid.

The results in this section suggest that $\delta_{\min}$ in the range $10^{-8}$ down to $10^{-10}$ is reasonable for any $\gamma \leq 10^8$. There is no clear choice for $\delta_{\max}$, but a plausible value would be $\delta_{\max}=2^{10}\delta_{\min}\approx 1000 \delta_{\min}$. This way we are guaranteed that the regularization doesn't take over the problem, and the number of failed factorizations is limited to $10$, which should be negligible in the total solution times for a series of $\approx 100$ problems. 

\subsection{Reordering $\Hgam$}
\label{sec:reorder}

\begin{figure}[h]   % Fig 7
\centering
  \includegraphics[width=\Width\textwidth]{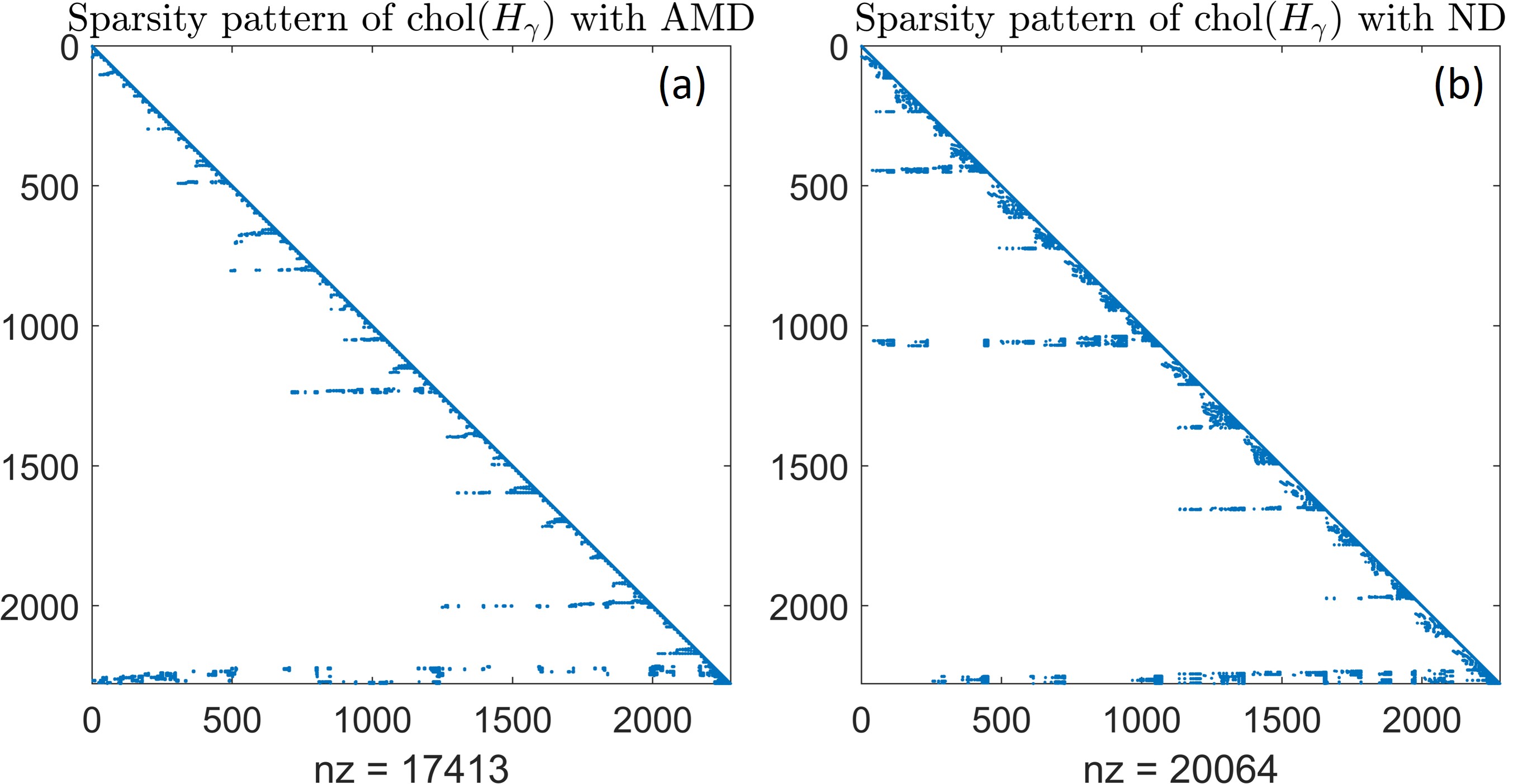}
  \caption{Illinois grid matrix 22: (a) Approximate minimum degree ordering of chol($\Hgam$) is sparser than (b)  Nested dissection ordering of chol($\Hgam$). Both orderings are calculated in \Matlab.}
  \label{fig:cholamd}
\end{figure}

\begin{comment}
The ordering of unknowns can substantially affect the sparsity of the Cholesky factorization, which is important to cut down on computation and memory costs. In this section we show that approximate minimum degree (\textbf{AMD}) works best for our problems and we use it implicitly in all other sections. The data in this section are generated via \Matlab.  
\end{comment}
\begin{comment}
\Cref{fig:Hamd} and \cref{fig:Hnd} show the structure of $\Hgam$ of matrix 22 in the Illinois grid sequence (though it is the same for other matrices in the sequence, and qualitatively similar for other ACTIVs matrices) for AMD and nested dissection (\textbf{ND}) respectively. 
\end{comment}
The efficiency of sparse Cholesky factorization $P \Hgam P^T = LL^T$
depends greatly on the row/column ordering defined by permutation $P$.
\Cref{fig:cholamd} compares the sparsity of $L$, corresponding to $\Hgam$ of matrix 22 in the Illinois grid sequence, obtained from two choices
of $P$: approximate minimum degree (\textbf{AMD}) and nested dissection.
(The data in this section are generated via \Matlab.)
We see that AMD produces a sparser $L$ ($17,413$ nonzeros vs.\ $20,064$).

Reverse Cuthill-McKee and no ordering gave $46,527$
and $759,805$ nonzeros respectively. Recall that the sparsity structure is identical for matrices from the same family and similar for other matrix families. As expected, AMD was the sparsest ordering tested for other matrix families. 
Thus, AMD is our reordering of choice.
It is a one-time cost performed during the optimization problem setup. 

\section{Comparison with \LDLT}
\label{sec:compare}

We compare our method with a direct solve of \eqref{linsys4x4} using MA57's \LDLT factorization \cite{Duff2004} with default settings. All testing in this section is done using a prototype \Cpp/CUDA code on a single GPU device. Reference MA57 solutions were computed on a CPU. Further details on computational platforms used are given in~\cref{tab:machines}. 

\begin{table}[t] %[htb]  % Table 3
\caption{\label{tab:machines} Accelerator devices and compilers used.  }
\centering
\smallskip
%\small
\footnotesize

\begin{tabular}{lllll} \toprule
   Machine name & Host processor & Accelerator device   & Host compiler & Device compiler
\\ \midrule
   Newell       & IBM Power9 & NVIDIA Volta 100     &  GCC 7.4.0       & CUDA 10.2
\\ Deception    & AMD EPYC 7502 & NVIDIA Ampere 100 &  GCC 7.5.0       & CUDA 11.1
\\ \bottomrule
\end{tabular}
\end{table}

The factorization density is $\rho_L=\left(2\,\nnz(L)+\nnz(D)\right)/N$. We define $\rho_C$ analogously for the Cholesky factors of $\Hgam$ in \eqref{linsys2x2mod}, with $\nnz(D)=0$ and dimension $\nx$. Note that $\rho$ gives the average number of nonzeros in the factorization per row or column.
%Note that in the case of LDL, $n_L$ refers to the size of the entire system \eqref{linsys4x4}, and in the case of Cholesky, $\nx$ refers to the size of $H$, as these are the matrices we need to solve systems with.
\Cref{tab:ldlcomp} shows that the Cholesky factor of $\Hgam$ is usually less dense than the \LDLT factor for \eqref{linsys4x4}, even though \eqref{linsys4x4} is sparser than \eqref{linsys2x2mod}. 
%\Cref{tab:fact} shows the factorization times in \Matlab on a laptop for the same two methods and the ratio between them.
\Cref{tab:solve} shows the solve times on the Newell computing cluster \cite{PNNLmachines}. The main trend is that as the problem size increases, GPUs using cuSolver~\cite{cuSolverweb} increasingly outperform equivalent computations on a CPU.

\begin{table}[t] %[htb]  % Table 4
\caption{\label{tab:ldlcomp} The dimensions, number of nonzeros, and factorization densities (the number of nonzeros in the factors per row) for solving \eqref{linsys4x4} directly with \LDLT ($N$, $\nnz_L$, $\rho_L$ respectively) and for solving \eqref{eq:gamma} with Cholesky ($\nx$, \nnz{}$_C$, $\rho_C$ respectively). Numbers are rounded to 3 digits. K and M signify $10^3$ and $10^6$. For all cases, $\rho_C<\rho_L$ and $\nx<N/2$. }
\centering
\smallskip
%\small
\footnotesize

\begin{tabular}{lrrrrrr} \toprule
   Abbreviation        & $N$ & $\nnz{}_L$& $\rho_L$& $\nx$ &$\nnz{}_C$&$\rho_C$
\\ \midrule
Illinois  &  4.64K & 94.7K & 20.4&  2.28K&34.9K &15.3 
\\ Texas &   55.7K & 2.95M&  52.9 & 25.9K&645K &24.9
\\ Western US  &  238K &10.7M&  44.8 & 116K&2.23M& 19.2
\\ Eastern US  &  1.64M & 85.4M& 52.1 & 794K&17.7M& 22.3
\\ \bottomrule
\end{tabular}
\end{table} %3

\begin{comment}
\begin{table}[t] %[htb]  % Table 5
\caption{\label{tab:fact} \Matlab factorization times (in seconds) for solving \eqref{linsys4x4} directly with \LDLT or for solving systems with $\Hdel$ with Cholesky, and the ratio between the former and the latter. Cholesky is quicker than \LDLT by an increasingly large ratio.}
\centering
\smallskip
%\small
\footnotesize

\begin{tabular}{lrrr} \toprule
 Name       & \LDLT & Cholesky & ratio 
\\ \midrule
 Illinois	& $1.34\cdot 10^{-2}$	& $2.24\cdot 10^{-3}$ &	5.99
\\ Texas	& $5.34\cdot 10^{-1}$	& $4.78\cdot 10^{-2}$ &	11.2
\\ Western US &	$2.53\cdot 10^{0\phantom{-}}$	& $1.34\cdot 10^{-1}$ &	18.9
\\ Eastern US&	$2.71\cdot 10^{1\phantom{+}}$ &	$9.84\cdot 10^{-1}$ &	27.6

\\ \bottomrule
\end{tabular}
\end{table}
\end{comment}

\begin{table}[t] %[htb]  % Table 5
  \newcommand{\tenp}[1]{\cdot 10^{#1}}
  \newcommand{\onep}{\cdot 10^0\phantom{-}}
\caption{\label{tab:solve} Average times (in seconds) for solving \eqref{linsys4x4} directly on a CPU with
\LDLT (via MA57~\cite{Duff2004}) or for solving one $\Hdel$ linear system with supernodal Cholesky via Cholmod (CM) in Suite\-Sparse~\cite{Cholmod}, or Cholesky via cuSolver~\cite{cuSolverweb} (CS), each on a CPU and on a GPU. Cholesky on a GPU is quicker than \LDLT on a CPU by an increasingly large ratio. CM GPU does not work for small problems. All runs are on Newell~\cite{PNNLmachines}.}
\centering
\smallskip
\footnotesize

\hspace*{-0pt}%
\begin{tabular}{l@{\qquad}l@{\qquad}l@{\qquad}l@{\qquad}l@{\qquad}l} \toprule
   Name       & MA57            & CM CPU          & CM GPU      & CS CPU          & CS GPU          
\\ \midrule
   Illinois   & $7.35\tenp{-3}$ & $1.74\tenp{-3}$ &             & $2.25\tenp{-3}$ & $5.80\tenp{-3}$    
\\ Texas      & $1.24\tenp{-1}$ & $3.42\tenp{-2}$ &             & $5.67\tenp{-2}$ & $4.79\tenp{-2}$    
\\ Western US & $4.30\tenp{-1}$ & $1.02\tenp{-1}$ &             & $1.89\tenp{-1}$ & $1.59\tenp{-1}$    
\\ Eastern US & $4.34\onep$     & $1.08\onep$     & $3.65\onep$ & $2.52\onep$     & $6.12\tenp{-1}$ 
\\ \bottomrule
\end{tabular}
\end{table}%4

\begin{table}[t] %[htb]  % Table 6
\caption{\label{tab:solvecg} Average times (in seconds) for solving sequences of systems \eqref{linsys4x4} directly on a CPU with \LDLT (via MA57~\cite{Duff2004}) or for solving sequences of systems \eqref{Schur1}--\eqref{Schur2} on a GPU. The latter is split into analysis and factorization phases, and multiple solves. Symbolic analysis is needed only once for the whole sequence. Factorization happens once for each matrix. The solve phase is the total time for Lines 15-17 in \cref{alg:CGschur} with a CG tolerance of $10^{-12}$ on Line 16. The results show that our method, without optimization of the code and kernels, outperforms \LDLT on the largest series (US Eastern Interconnection grid) by a factor of more than $2$ on a single matrix, and more than $3$ on a whole series, because the cost of symbolic analysis can be amortized over the series. All runs are on Deception~\cite{PNNLmachines}.}
\centering
\smallskip
\footnotesize

\hspace*{-0pt}%
\begin{tabular}{lclll} \toprule
   Name     & MA57 &\multicolumn{3}{c}{Hybrid Direct-Iterative Solver}
 \\ \midrule   &  
& Analysis
&Factorization
&Total solves
\\ \midrule
 Illinois	& $6.24\cdot 10^{-3}$	& $3.87\cdot 10^{-3}$ &	$5.07\cdot 10^{-3}$ &$8.55\cdot 10^{-3}$ 
\\ Texas	& $1.00\cdot 10^{-1}$& $2.58\cdot 10^{-2}$	& $3.54\cdot 10^{-2}$ & $1.02\cdot 10^{-1}$	
\\ Western US &	$3.38\cdot 10^{-1}$	& $1.54\cdot 10^{-1}$ &	$1.74\cdot 10^{-1}$ & $1.43\cdot 10^{-1}$
\\ Eastern US &	$3.48\cdot 10^{0\phantom{-}}$ &	$5.81\cdot 10^{-1}$ &	$6.94\cdot 10^{-1}$ &	$3.25\cdot 10^{-1}$
\\ \bottomrule
\end{tabular}
\end{table}
%There is a vast body of literature on parallelizing direct linear solvers using multi\-frontal or supernodal approaches (see for example~\cite{davis2016} and references therein), which permute the system matrix to obtain dense blocks that can be processed in parallel. These approaches are not suitable for very sparse and irregular systems arising in engineering problems, where the dense blocks become too small, leading to an unfavorable ratio of communication versus computation~\cite{davis2010algorithm,booth2016basker,he2015gpu}. This issue is exacerbated when supernodal or multifrontal approaches are used for fine-grain parallelization on GPUs~\cite{swirydowicz2020linear}. 
Supernodal Cholesky via Cholmod in Suite\-sparse~\cite{Cholmod} does not perform well on GPUs for these test cases, but performs better than cuSolver on CPUs. 
This matches literature showing that multi\-frontal or supernodal approaches are not suitable for very sparse and irregular systems, where the dense blocks become too small, leading to an unfavorable ratio of communication versus computation~\cite{davis2010algorithm,booth2016basker,he2015gpu}. This issue is exacerbated when supernodal or multifrontal approaches are used for fine-grain parallelization on GPUs~\cite{swirydowicz2020linear}. 
Our method becomes better when the ratio of \LDLT to Cholesky factorization time grows, because factorization is the most costly part of linear solvers and our method has more (but smaller and less costly) system solves.  
%Crucially, the ratio increases with problem size.

\Cref{tab:solvecg} compares a direct solve of \eqref{linsys4x4} using MA57's \LDLT factorization~\cite{Duff2004} and the full CG solve and direct solves on \{\eqref{Schur1}--\eqref{Schur2}\}, broken down into symbolic analysis of $\Hdel$, factorization of $\Hdel$, and CG on \eqref{Schur1} on Deception \cite{PNNLmachines}. 

When Cholesky is used, symbolic analysis is needed only for the first matrix in the sequence because pivoting is not a concern. As problems grow larger, the solve phase becomes a smaller part of the total run time. 
%Additionally, as the matrix sizes grow,
Also, our method increasingly outperforms MA57.  The run time is reduced by a factor of more than $2$ on one matrix from the US Eastern Interconnection grid and more than $3$ on the whole series, because the analysis cost can be amortized over the entire optimization problem.
 This motivates using our method for even larger problems. 
 
 Another advantage of our method is that it solves systems that are less than half the size of the original one, though it does have to solve 
more of them. Notably, the \LDLT factorization may require pivoting during the factorization, whereas Cholesky does not. With MA57, all our test cases required substantial permutations
%even though the pivot tolerance was relatively lax
even with a lax pivot tolerance of 0.01
(and a tolerance as large as 0.5 may be required to keep the factorization stable). This means that for our systems, \LDLT factorization requires considerable communication and presents a major barrier for GPU programming. On the other hand, the direct solve is generally more accurate by 2--3 orders of magnitude.

\section{Iterative vs.~direct solve with $\Hdel$ in \cref{alg:CGschur}}
\label{sec:itdirect}

We may ask if forming $\Hdel \equiv H + D_x + J_d^T D_s J_d + J^T Q J + \delta_1 I$ and its Cholesky factorization $\Hdel = LL^T$ is worthwhile when it could be avoided by iterative solves with $\Hdel$.
%, including several ``inner" solves with $\Hdel$ during
%``outer"  % This is the only use of outer.
%solves with $S = J\Hgam\inv J^T$.
%which don't require factorizations or forming the matrix-matrix product $J^TJ$ (which makes $\Hdel,\Hgam$ dense compared to $\Htilde$).
Systems \eqref{Schur1}--\eqref{Schur2} require two solves with $\Hdel$ and an iterative solve with $S = J\Hdel\inv J^T + \delta_2 I$, which includes ``inner" solves with $\Hdel$ until CG converges.  As we see in \cref{sec:testing}, %in practice
between 6 and 92 solves with $\Hdel$ are needed in our cases (and possibly more in other cases). Further, the inner iterative solves with $\Hdel$ would degrade the accuracy compared to inner direct solves or would require an excessive number of iterations. Therefore, for iterative solves with $\Hdel$ to be viable, the direct solve would have to cause substantial densification of the problem (i.e., the Cholesky factor $L$ would have to be very dense).
%We see in Table~\cref{tab:dense} that this does not happen.
%, making it unlikely that an inner iterative solve would be better. 
%We use \nnz{} as shorthand for number of nonzeros.
Let $\nnz\op(\Hdel) = \nnz(\Htilde) + 2\,\nnz(J) + \nx$ be the number of multiplications when $\Hdel$ is applied as an operator, and $\nnz\fac(\Hdel)=2\,\nnz(L)$ be the number of multiplications for solving systems with $\Hdel$. These values (generated in \Matlab) and their ratio  are given in \cref{tab:dense}. The ratio is always small and does not grow with problem size, meaning $L$ remains very sparse and the factorization is efficient. As the factorization dominates the total time of a direct solve with multiple right-hand sides, this suggests that performing multiple inner iterative solves is not worthwhile.

\begin{table}[t] %[htb]  % Table 7
\caption{\label{tab:dense} Densification of the problem for cases where the direct-iterative method is viable. Numbers are rounded to 3 digits.
K and M signify $10^3$ and $10^6$. $\nnz\op(\Hdel) = \nnz(\Htilde) + 2\,\nnz(J) + n$ is the number of multiplications when $\Hdel$ is applied as an operator, and $\nnz\fac(\Hdel)=2\,\nnz(L)$ is the number of multiplications for solving systems with $\Hdel$.
The ratio $\nnz\fac(\Hdel)/\nnz\op(\Hdel)$
is only about 2 in all cases.}

\centering
\smallskip
%\small
\footnotesize

\begin{tabular}{lrrr} \toprule
   Name        & $\nnz\op(\Hdel)$ & $\nnz\fac(\Hdel)$ & ratio
\\ \midrule
Illinois   &  20.5K &  34.9K & 1.70
\\ Texas  &   249K &   646K & 2.59
\\ Western US  &  1.05M &  2.23M & 2.11
\\ Eastern US  &  7.23M &  17.7M & 2.45
\\ \bottomrule
\end{tabular}
\end{table}

\section{Summary}
\label{sec:summary}
 Following the approach of Golub and Greif \cite{GG03}, we have developed a novel direct-iterative method for solving saddle point systems, and  shown that it scales better with problem size than \LDLT on systems arising from optimal power flow~\cite{ACOPF}. The method is tailored for execution on hardware accelerators where pivoting is difficult to implement and degrades solver performance dramatically.
To solve KKT systems of the form \eqref{linsys4x4}, \cref{alg:CGschur} presents a method with an outer iterative solve and inner direct solve. The method assumes
$\Hgam = H + D_x + J_d^T D_s J_d + \gamma J^T J$
is SPD (or almost) for some $\gamma\ge0$, and if necessary,
uses the minimal amount of regularization $\delta_1 \ge 0$
(to within a factor of 2)
to ensure $\Hdel = \Hgam + \delta_1 I$ is SPD. We proved that as $\gamma$ grows large, the condition number of $\Hgam$ grows linearly with $\gamma$, and the eigenvalues of the iteration matrix $S$ converge to $1/\gamma$ (\eqref{eq:S3}). These results provide some heuristics for choosing $\gamma$,
and explain why CG on the Schur complement system \eqref{Schur1} converges rapidly. 
A future direction of research is developing a better method to select $\gamma$.
On several sequences of systems arising from applying an interior method to OPF problems, the number of CG iterations for solving \eqref{Schur1} was less than 20 iterations on average, even though no preconditioning was used. 

Four of the five matrix series were solved with $\delta_1=\delta_2=0$, and the BE for the original system \eqref{linsys4x4} was always less than $10^{-8}$. The efficiency gained by using a Cholesky factorization (instead of \LDLT) and avoiding pivoting is demonstrated in \Cref{tab:ldlcomp}. Even though $\Hgam$ in \eqref{linsys2x2mod} is denser than $K_k$ in \eqref{linsys4x4}, its factors are sparser.
\Cref{tab:solvecg} shows that our method, when it succeeds, has better scalability than \LDLT and is able to utilize GPUs. This is the most substantial result of our paper. For the fifth series (smaller than 2 of the others) $\delta_2=0$ worked, but $\delta_1$ had to be of order 1 and no accurate solution could be obtained.  The development of robust heuristics to select $\delta_1$ and $\delta_2$, and to integrate with the filter line-search algorithm, will be subject of future work.

The fact that the Cholesky factors are scarcely denser than the original matrix suggests that not much could be gained by using nullspace methods~\cite{Rozloznik2018} for the four sequences we were able to solve, as those require sparse LU or QR factorization of $J^T$, which is typically less efficient than sparse Cholesky factorization of $\Hdel$.
%~\cite{GV4,Trefethen}.
For the fifth sequence, and sequences similar to it, an efficient nullspace method may be better than the current fail-safe $\LDLT$ factorization of the $4 \times 4$
system \eqref{linsys4x4}.
%A future direction of research is to find a practical preconditioner for the Schur complement system \cref{Schur1}. 

\section*{Acknowledgements}

This research was supported by the Exascale Computing Project (ECP), Project Number: 17-SC-20-SC, a collaborative effort of two DOE organizations (the Office of Science and the National Nuclear Security Administration) responsible for the planning and preparation of a capable exascale ecosystem---including software, applications, hardware, advanced system engineering, and early testbed platforms---to support the nation's exascale computing imperative.

We thank Research Computing at Pacific Northwest National Laboratory (PNNL) for computing support. We are also grateful to Christopher Oehmen and Lori Ross O'Neil of PNNL for critical reading of the manuscript and for providing helpful feedback. Finally, we would like to express our gratitude to Stephen Thomas of National Renewable Energy Laboratory for initiating discussion that motivated Section \ref{sec:itdirect}. 

%\clearpage
%\newpage

\footnotesize
\bibliography{bibfile,refs-slaven}.
\normalsize
\begin{appendices}
\section{Additional OPF matrix results}
\label{appA}

\begin{figure}[htbp]   % Fig 16
\centering
  \includegraphics[width=\Width\textwidth]{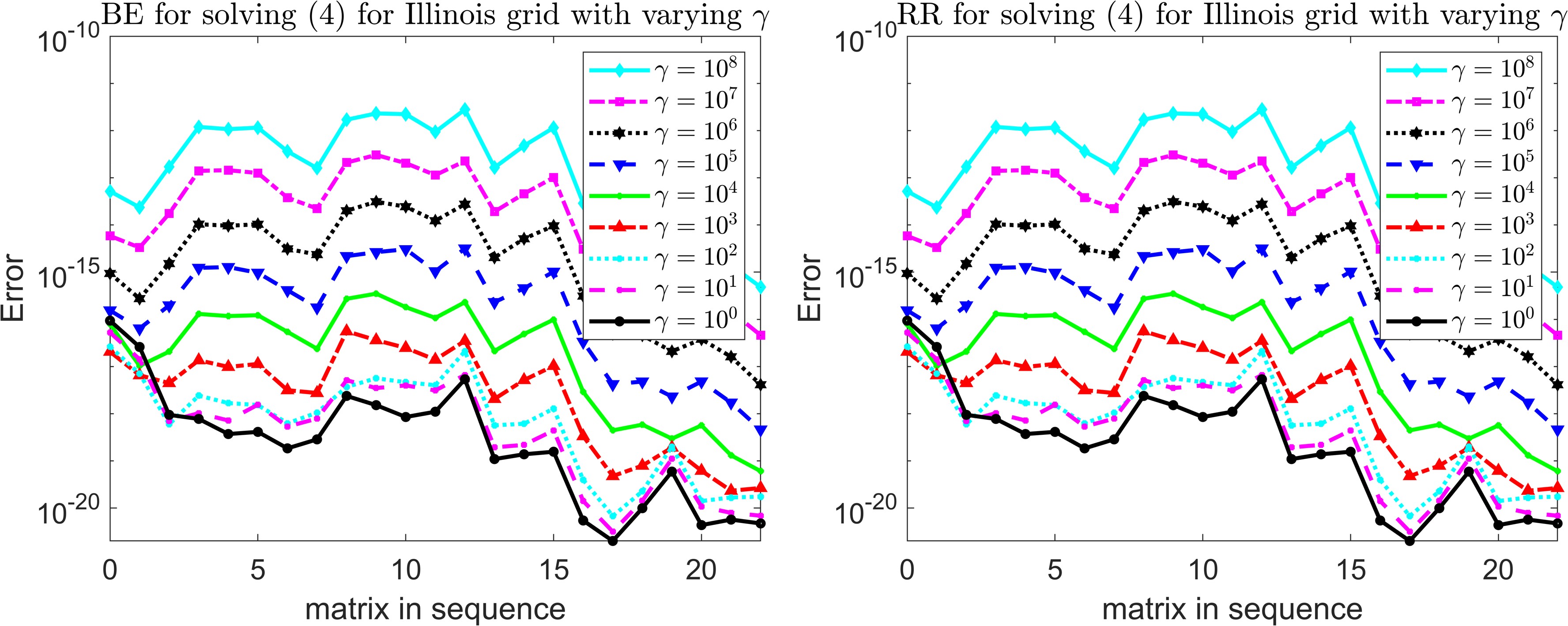}
  \caption{Illinois grid  \eqref{linsys2x2} with varying $\gamma$ in \eqref{eq:gamma}: (a) BE, $\gamma \leq 10^4$ gives results close to machine precision. (b)  RR, $\gamma \leq 10^4$ has $\text{RR} \le 10^{-8}$.}
  \label{fig:alphas200back22low}
\end{figure}

\begin{figure}[htbp]   % Fig 19
\centering
  \includegraphics[width=\Width\textwidth]{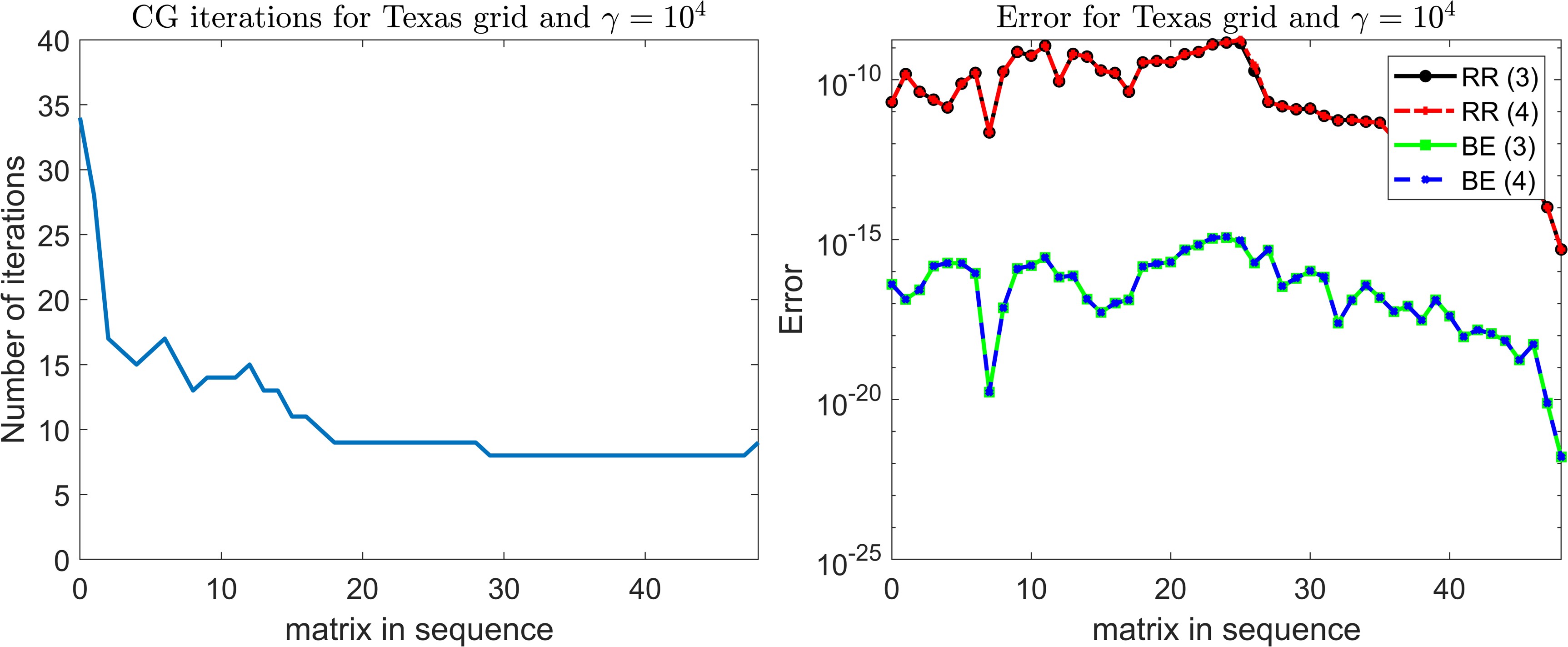}
  \caption{Texas grid with $\gamma=10^4$: (a) CG iterations on \cref{Schur1}. The mean number of iterations is $11.1$. (b) BE and RR for \eqref{linsys4x4} and \eqref{linsys2x2}. The BEs are roughly machine precision and the RRs are less than $10^{-8}$. }
  \label{fig:Texas gridits}
\end{figure}

\Cref{fig:alphas200back22low} shows BE and RR in \eqref{linsys2x2} for varying $\gamma$ for the Illinois grid. For $1\ le \gamma\leq 10^4$ the results are accurate.

CG convergence for the Texas grid with $\gamma=10^4$ is given in \cref{fig:Texas gridits}(a), while (b) shows that the solution is very accurate and all errors are smaller than $10^{-8}$.

 \end{appendices}
\end{document}